\newcommand\TheTitle{%
  Solving rank structured Sylvester and Lyapunov equations}
\newcommand\TheShortTitle{\TheTitle}
\newcommand\TheAuthors{Stefano Massei, Davide Palitta, Leonardo Robol}
\newcommand{\trid}{\mathrm{trid}}
\newcommand{\sign}{\mathrm{sign}}
\newcommand{\vect}{\mathrm{vec}}
\DeclareMathOperator{\tril}{tril}
\headers{\TheShortTitle}{\TheAuthors}
\definecolor{lowrankcolor}{rgb}{.75,.75,.75}
\title{\TheTitle}
\author{%
  Stefano Massei\thanks{EPF Lausanne, Switzerland,
    \email{stefano.massei@epfl.ch}} \and
  Davide Palitta\thanks{Dipartimento di Matematica, Universit\`a di Bologna, Bologna, Italy,
    \email{davide.palitta3@unibo.it}} \and
  Leonardo Robol\thanks{ISTI-CNR, Pisa, Italy, 
    \email{leonardo.robol@isti.cnr.it}}
}
\pgfplotsset{compat=1.9}
\DeclarePairedDelimiter{\norm}{\lVert}{\rVert}
\numberwithin{theorem}{section}
\renewcommand{\leq}{\leqslant}
\renewcommand{\geq}{\geqslant}
\renewcommand{\tilde}{\widetilde}
\begin{document}
\maketitle

\begin{abstract}
  We consider the problem of efficiently solving Sylvester and
  Lyapunov equations of medium and large scale, in case of
  rank-structured data, i.e., when the coefficient matrices and the
  right-hand side have low-rank off-diagonal blocks. This comprises
  problems with banded data, recently studied in
  \cite{haber,Palitta2017}, which often arise in the discretization of
  elliptic PDEs.

  We show that, under suitable assumptions, the quasiseparable
  structure is guaranteed to be numerically present in the solution,
  and explicit novel estimates of the numerical rank of the off-diagonal
  blocks are provided.
  
  Efficient solution schemes that rely on the technology of
  hierarchical matrices are described,
 and several numerical experiments 
  confirm the applicability and efficiency of the approaches. We develop
  a MATLAB toolbox that allows easy replication of the experiments
  and a ready-to-use interface for the solvers. The performances of the
  different approaches are compared, and we show that the new methods
  described are efficient on several classes of relevant problems. 
   \bigskip
   
   {\bf Keywords:} Sylvester equation, Lyapunov equation, banded matrices, quasiseparable matrices, off-diagonal singular values,  $\mathcal H$-matrices.
   
   \bigskip 
   
   {\bf AMS subject classifications:} 
   15A06, % Linear equations 
   15A24, % Matrix equations and identities
   65D32, % Quadrature and cubature formulas 
   65F10, % Iterative methods for linear systems
   93C20. % Systems governed by partial differential equations
\end{abstract}

\section{Introduction}
\label{sec:introduction}

We consider the problem of solving Sylvester equations of the form
\begin{equation}\label{eq:lyap}
  AX + XB = C,
\end{equation}
where 
 $A\in\mathbb{R}^{n_A\times n_A}$, $B\in\mathbb{R}^{n_B\times n_B}$, $C\in\mathbb{R}^{n_A\times n_B}$
and $A$, $B$ are symmetric positive definite and rank-structured.
More precisely, we assume that the matrices $A$, $B$ and $C$
are \emph{quasiseparable}, i.e., their off-diagonal blocks have low rank. %
 For sake of simplicity, throughout the paper we assume $C$ to be square, 
that is $n_A=n_B\equiv n$, but our results can be easily extended to the case of different $n_A$ and $n_B$.

Sylvester equations arise in different settings, such as problems of
control \cite{benner2008numerical,Antoulas.05}, discretization of PDEs
\cite{Palitta2016, Breiten2014}, block-diagonalization
\cite[chapter 7.1.4]{golub2012matrix},
and many others.  The Lyapunov equation,
that is \eqref{eq:lyap} with $B=A$, is of particular interest due to
its important role in control theory \cite{Antoulas.05}. The symmetric
and positive definite constraint is not strictly necessary in our
analysis, and some relaxations involving the field of values
will be presented. 

Even in the case of sparse $A$, $B$ and $C$, the solution $X$ to
\eqref{eq:lyap} is, in general, dense and it cannot be easily stored for
large-scale problems.  To overcome this numerical difficulty, the
right-hand side is often supposed to be low rank, i.e., $C= C_1 C_2^T$
with $C_1,C_2\in\mathbb{R}^{n\times k}$,
 $k \ll n$.  In this case,
under some suitable assumptions on the spectra of $A$ and $B$, it is
possible to prove that the solution $X$ is numerically low rank
\cite{penzl,antoulas02,grasedyck-existence,beckermann2016} so that it
 can be well-approximated by a low-rank matrix
$X \approx UV^T$. The low-rank property of $X$ justifies 
 the solution of this kind of equations by the so-called low-rank
 methods, which directly compute and store only the factors $U$, $V$.
 A large amount of work in this direction has been carried out in the
 last years. See, e.g., \cite{Simoncini2016} and the references
 therein.  However, in many cases the known term $C$ is not low
 rank. It is very easy to construct a simplified example to show that
 low-rank methods have no hope of being effective in this more general
 context.  Consider equation \eqref{eq:lyap} with
 $A=B = I$ and $C = 2I$ where $I$ denotes the identity matrix.  It is
 immediate to check that the solution is $X = I$, and therefore every
 approximation $UV^T \approx X$ which is not full rank needs to
 satisfy $\lVert UV^T - X \rVert_2 \geq 1$. Obviously, this example
 has no practical relevance from the computational point of view,
 since a Lyapunov equation with diagonal data needs to have a diagonal
 solution, which can be computed in $O(n)$ time and represented in
 $O(n)$ storage.  Nevertheless, it shows that
 even if all the coefficients and the solution $X$ are full rank, they can indeed be very strucutred. One might
 wonder if also banded structures are preserved. This is not true, in
 general, since banded matrices are not an algebra (in contrast to
 what is true for diagonal ones), but approaches which exploit the
 banded properties of $A$, $B$, $C$ and, to a certain extent, of the
 solution $X$, have been recently proposed by Haber and Verhaegen in
 \cite{haber} and by Palitta and Simoncini in \cite{Palitta2017}.
  The preservation of a banded structure in the solution is strictly connected with the conditioning of $A$ and $B$. Unless they are both ill-conditioned, the solution $X$ of \eqref{eq:lyap} is well approximated by a banded matrix $\tilde X$.
 Otherwise,  it has been shown that $X$ can be represented by a couple $(X_B,S_m)$, $X\approx X_B+S_mS_m^T$, where $X_B$ 
 is banded and $S_m$ low-rank so that a low memory allocation is still required, see \cite{Palitta2017}.

 In this work, we consider a more general structure, the so-called
 quasiseparability, which is often numerically present in $X$ when we
 have it in $A$, $B$ and $C$, so that a low memory requirement is
 demanded for storing the solution.  Informally, a matrix is said to
 be quasiseparable if its off-diagonal blocks are low-rank matrices, and the
 quasiseparable rank is defined as the maximum of the ranks of the off-diagonal
 blocks. We say that a matrix is numerically quasiseparable when
 the above property holds only up to a certain $\epsilon$, i.e., only
 few singular values of each off-diagonal block are above a fixed
 threshold.

A simple yet meaningful example arises from the
context of PDEs: consider the differential equation

\begin{equation}\label{Ex1:Laplacian}
    \begin{cases}
      -\Delta u = 
      \log \left( \tau + \left| x - y \right| \right), & 
      (x, y) \in      \Omega,  \\
      u(x,y) \equiv 0, & (x,y) \in \partial \Omega, 
    \end{cases}, \qquad
    \Delta u = \frac{\partial^2 u}{\partial x^2} + \frac{\partial^2 u}{\partial y^2}.
  \end{equation}
  where $\Omega$ is the rectangular domain $[0, 1] \times [0, 1]$ and
  $\tau>0$. The discretization by centered finite
  differences of equation \eqref{Ex1:Laplacian} with $n$ nodes in each
  direction, $(x_i,y_j)$, $i,j=1,\ldots,n,$ yields the following
  Lyapunov equation
 \[
    \begin{array}{c}
    AX + XA = C,\qquad % \\
    % \\
    A,C\in\mathbb{R}^{n\times n},\\
    \\
      C_{i,j}= \log \left( \tau + \left| x_i - y_j \right| \right), \\
      \\
      h := \frac{1}{n-1}, \\
    \end{array} \qquad
    A =
    \frac{1}{h^2} \begin{bmatrix}
      2 & -1 \\
      -1 & 2 & -1 \\
      & \ddots & \ddots & \ddots \\
      && -1 & 2 & -1 \\
      &&& -1 & 2 \\
    \end{bmatrix}. 
  \]
  The fact that $A$ is banded implies that it is
  quasiseparable, 
  and also $C$ shares this property.
  Indeed, this follows from the fact that the modulus function it is
  not regular in the whole domain but it is analytic when the sign of
  $x-y$ is constant. This happens in the sub-domains corresponding to
  the off-diagonal blocks.  Separable approximation (and thus
  low-rank) can be obtained by expanding the source $\log(\tau+|x+y|)$
  in the Chebyshev basis. The approximation of this kind of functions
  has been previously investigated in \cite[Chapter 9]{hackbusch2000h}. 
  In Figure~\ref{fig:Clog} (on the right) we
  have reported the decay of the singular values of one off-diagonal
  block of $C$ and $X$ for the case of $\tau = 10^{-4}$ and
  $n=300$. In this case the numerical quasiseparable rank of the
  right-hand side $C$ and the solution $X$ does not exceed $20$ and
  $30$, respectively.  This property holds for any $\tau > 0$: in
  Figure~\ref{fig:Clog} (on the left) we have checked the
  quasiseparable rank of the matrix $C$ for various values of $\tau$,
  and one can see that it is uniformly bounded. The rank is higher
  when $\tau$ is small, because the function is ``less regular'', and
  tends to $1$ as $\tau \to \infty$, because the off-diagonal blocks
  tend to a constant in this case.
  
  \begin{figure}
    \centering
    \begin{tikzpicture}\begin{semilogxaxis}[ylabel = {QS rank},
      xlabel = $\tau$, width = .45\linewidth,
      height = .3\textheight, ymax = 25]
      \addplot table[x index = 0, y index = 1] {tau.dat};
      \legend{QS rank of $C$}
    \end{semilogxaxis}\end{tikzpicture}~\begin{tikzpicture}
      \begin{semilogyaxis}[ylabel = {$\sigma_\ell$} ,
      xlabel = $\ell$, width = .45\linewidth,
        height=.3\textheight]     
        \addplot table[x index = 0, y index = 1] {lap_log.dat};
        \addplot table[x index = 0, y index = 2] {lap_log.dat};
        \addplot [black, dashed, no markers] coordinates {(0,2.2204e-16) (42,2.2204e-16)};
        \legend{$\sigma_l(Y_X)$,
          $\sigma_l(Y_C)$, Unit roundoff};
      \end{semilogyaxis}
    \end{tikzpicture}    
    \caption{On the left, the maximum numerical ranks
      of the off-diagonal blocks
      of the right hand-side $C$ for different values of $\tau$ and $n=300$, using a
      threshold of $10^{-14}$ for truncation. On the right, we set $\tau=10^{-4}$ and the       
      singular values of the off-diagonal blocks
      $Y_C := C(\frac n2+1:n, 1:\frac n2)$ and $Y_X := X(\frac n2+1:n, 1:\frac n2)$ 
    rescaled by the $2$-norm of the two blocks respectively are reported.
     The black dashed line indicates the machine precision
    $2.22\cdot 10^{-16}$.}
    \label{fig:Clog}
  \end{figure}
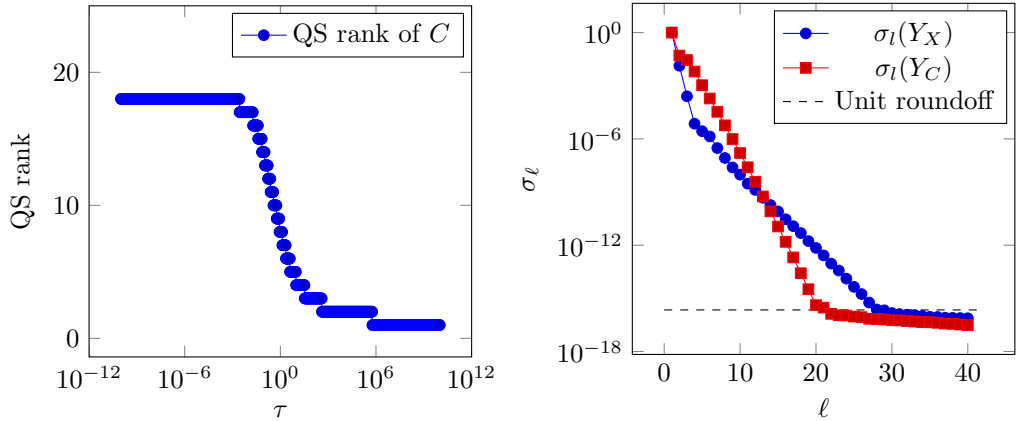

  The problem of solving linear
  matrix equations whose coefficients are represented as
  $\mathcal H$-matrices has already been addressed in \cite{grasedyck-existence,grasedyck-riccati}. In \cite{baur2006factorized}, 
  the authors consider the case of Lyapunov equations with $\mathcal H$-matrices
  coefficients and low-rank right hand side. Recently, in \cite{Bini2017,Bini2016b} the use of hierarchical matrices in the cyclic reduction iteration for solving quadratic matrix equations has been deeply studied.  We will exploit the framework of $\mathcal H$-matrices to store
  quasiseparable matrices and to perform matrix operations at an
  almost linear cost (up to logarithmic factors).

In this paper, we compare the use of hierarchical matrices in the matrix sign iteration, and in the 
estimation of an integral formula for solving \eqref{eq:lyap}. 
The latter approach,  suggested but not  numerically tested in \cite{grasedyck-existence,grasedyck-riccati}, 
relies on evaluating the closed formula  \cite{SaadBoston1990.Birkhauser}

\begin{equation}\label{eq:integral}
X=\int_0^{+\infty}e^{-At}Ce^{-Bt}dt,
\end{equation}
by combining
a numerical integrating scheme and rational approximations for the matrix exponential.  
 We employ \eqref{eq:integral} for our purpose but different closed forms of $X$ 
are available in the literature. See, e.g., \cite{Simoncini2016}.
Starting with $\mathcal H$-matrices representations of $A$, $B$ and $C$, formula~\eqref{eq:integral} can
be efficiently approximated exploiting $\mathcal H$-arithmetic.
To the best of our knowledge, this technique has been exploited only theoretically
for computing $X$ in the $\mathcal H$-matrix framework. On the other hand, exponential
sums are widely used as an approximation tool in the solution of tensor Sylvester
equations \cite{braess2005approximation,braess2009efficient}. 

The representation \eqref{eq:integral} has already been used in
\cite{grasedyck-existence,grasedyck-riccati} as a theoretical tool to estimate 
the quasiseparable rank of the solution, 
but the derived bounds may be very pessimistic, and are
linked with the convergence of the integral formula, which cannot
be easily made explicit.
We improve these estimates by developing a theoretical analysis which
relies on some recent results 
\cite{beckermann2016}, exploited also in
\cite{Bini2017}, where the numerical rank of the
solution $X$ is determined by estimating the exponential decay in the
singular values of its off-diagonal blocks.

The paper is organized as follows; in
Section~\ref{sec:quasisep-solution} we introduce the notion of
quasiseparability and we deliver the technical tools for analyzing the
preservation of the structure in the solution $X$.  In particular, we
provide bounds for the off-diagonal singular values of $X$ and we show
some numerical experiments in order to validate them. In
Section~\ref{sec:hodlr}, hierarchically off diagonal low-rank (HODLR)
matrices are introduced describing their impact in the computational
effort for handling matrix operations.  The two algorithms for solving
\eqref{eq:lyap} are presented in Section \ref{sec:procedure}. In
particular, in Section \ref{Sec_signfun} we recall the sign function
method presented in \cite{grasedyck-riccati}, whereas the procedure used
for the numerical approximation of \eqref{eq:integral} is illustrated
in Section \ref{Sec_integral}.  Both the approaches are based on
the use of 
HODLR arithmetic. We address the solution of certain generalized
Lyapunov and Sylvester equations in Section \ref{Sec_genLyap}.
In Section~\ref{sec:numerical} we 
perform numerical tests on instances of \eqref{eq:lyap} coming from both
artificially crafted models and real-world problems where the
quasiseparable structure is present. Finally, in Section~\ref{sec:conclusion} we draw some concluding remarks.
\section{Quasiseparable structure in the solution}
\label{sec:quasisep-solution}

The main purpose of this section is to prove that, under some 
reasonable assumptions on the spectrum of $A$ and $B$, the solution
$X$ to the matrix equation \eqref{eq:lyap} needs to be quasiseparable
if $A$, $B$ and $C$ are quasiseparable. Throughout the paper we indicate with $\sigma_1(M)\le \sigma_2(M)\le\dots$ the ordered singular values of the matrix $M$.

\subsection{Quasiseparability structures}
\label{sec:quasiseparability-structures}

The literature on quasiseparable (or semiseparable) matrices is
rather large, and the term is often used to denote slightly different
objects. Therefore, also in the spirit of making this paper
as self-contained as possible, we recall the definition of quasiseparable
matrices that we will use throughout the paper. We refer to
\cite{vanbarel:book1,vanbarel:book2,eidelman:book1,vandebril2005bibliography}
and the references therein for a complete survey about quasiseparable
and semiseparable structures. 

\begin{definition}
  A matrix $A$ is \emph{quasiseparable} of order $k$ if the maximum of the ranks of
  all its submatrices contained in the strictly upper or lower part is less or equal than
  $k$. 
\end{definition}
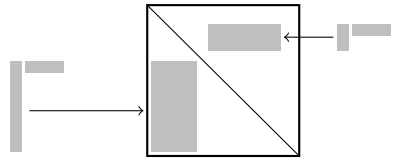
\begin{figure}[!ht]
  \centering
  \begin{tikzpicture}
    \draw [thick] (0,0) rectangle (2,2);
    \draw (0,2) -- (2,0);

    % Low rank blocks
    \fill [lowrankcolor] (0.05,1.25) rectangle (0.65,.05);
    \fill [lowrankcolor] (0.8,1.4) rectangle (1.75,1.75);

    % ... and their representation as outer products
    \fill [lowrankcolor] (-1.8,1.25) rectangle (-1.65,.05);
    \fill [lowrankcolor] (-1.6,1.25) rectangle (-1.1,1.1);
    \draw[->] (-1.55,0.6) -- (-.05,0.6);

    \fill [lowrankcolor] (2.5,1.75) rectangle (2.65,1.4);
    \fill [lowrankcolor] (2.7,1.75) rectangle (3.2, 1.6);
    \draw [<-] (1.8,1.575) -- (2.45,1.575);
  \end{tikzpicture}
  \caption{Pictorial description of the quasiseparable structure; the
    off-diagonal blocks can be represented as low-rank outer products.}
\end{figure}

\begin{example}
  A banded matrix with bandwidth $k$ is quasiseparable of order (at most)
  $k$. In particular, diagonal matrices are quasiseparable of order
  $0$, tridiagonal matrices are quasiseparable of order $1$, and so on. 
\end{example}

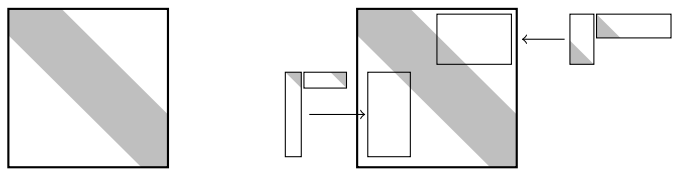
\begin{figure}[!ht]
  \centering
  \begin{tikzpicture}[scale=0.7]
    \fill[lowrankcolor] (0,3) -- (1,3) -- (3,1) -- (3,0) -- (2.5,0) -- (0,2.5) -- (0,3);
    \draw[thick] (0,0) rectangle (3,3);
  \end{tikzpicture}
  \qquad \qquad
  \begin{tikzpicture}[scale=0.7]
    % Banded matrix
    \fill[lowrankcolor] (0,3) -- (1,3) -- (3,1) -- (3,0) -- (2.5,0) -- (0,2.5) -- (0,3);
    \draw[thick] (0,0) rectangle (3,3);

    % Selected rectangles
    \draw (0.2,1.8) rectangle (1,0.2);
    \draw (1.5,2.9) rectangle (2.9,1.95);

    % First rectangle: here the band is 0.3
    \fill[lowrankcolor] (-.5,1.8) -- (-.2,1.8) -- (-.2,1.5) -- cycle;    
    \draw (-1.0,1.8) rectangle (-.2,1.5); 
    \fill[lowrankcolor] (-1.35,1.8) -- (-1.05,1.8) -- (-1.05,1.5) -- cycle;
    \draw (-1.35,1.8) rectangle (-1.05,0.2);
    \draw[->] (-.9,1.0) -- (.15,1);

    % Second rectangle; hese the band is 0.45
    \fill [lowrankcolor] (4.0,2.4) -- (4.0,1.95) -- (4.45,1.95) -- cycle;
    \draw (4.0,2.9) rectangle (4.45,1.95);
    \fill [lowrankcolor] (4.5,2.9) -- (4.5,2.45) -- (4.95,2.45) -- cycle;
    \draw (4.5,2.9) rectangle (5.9,2.45);
    \draw [<-] (3.1,2.425) -- (3.9,2.425);
  \end{tikzpicture}
  \caption{Graphic description of the quasiseparability of banded
    matrices; in grey, the non zero entries}
\end{figure}

\subsection{Zolotarev problems and off-diagonal singular values}
\label{sec:zolotarev}

We are interested in exploiting the quasiseparable rank in numerical
computations. In many cases, the request of the exact preservation of a certain
structure is too strong -- and it can not be
guaranteed. However, for computational purposes, we are satisfied if
the property holds in an approximate way, i.e.,
if our data are well-approximated by structured ones. 
This can be rephrased by asking that the off-diagonal blocks of the
solution $X$ of \eqref{eq:lyap} have a low numerical rank. More
precisely, given a generic off-diagonal block of the sought solution
$X$, we want to prove that only a limited number of its singular
values are larger than $\epsilon \cdot \norm{X}_2$, where $\epsilon$ is a
given threshold. This kind of analysis has been already performed in \cite{Bini2017,Bini2016b,Massei2017} for studying the numerical preservation of quasiseparability when solving quadratic matrix equations and computing matrix functions. See also the Ph.D. thesis \cite{massei2017exploiting} for more details.

In order to formalize this approach, we extend a result that provides bounds for the singular values of the solution of 
\eqref{eq:lyap} when the right hand-side has low-rank. 
The latter 
is based on an old problem considered by Zolotarev at the
end of the 19th century \cite{zolotarev1877application},
which concerns
rational approximation in the complex plane.
The following version can be found, along with the proof,
in \cite[Theorem 2.1]{beckermann2016} or
in a similar form in \cite[Theorem 4.2]{Bini2017}.

\begin{theorem}\label{thm:zol1}
  Let $X$ be an $n \times n$ matrix that satisfies the relation
  $AX + XB  = C$, where $C$ is of rank $k$ and $A,B$ are normal matrices.
  Let $E, F$ be two disjoint sets containing the spectra of
  $A$ and $-B$, respectively. Then, the following
  upper bound on the singular values of $X$ holds,
  \[
    \frac{\sigma_{1 + k\ell}(X)}{\sigma_1(X)} \leq Z_\ell(E, F) := \inf_{r(x) \in \mathcal R_{\ell,\ell}}
    \frac{\max_{x \in E} |r(x)|}{\min_{y \in F} |r(y)|}, \qquad
    \ell \geq 1, 
  \]
  where $\mathcal R_{\ell,\ell}$ is the set of rational functions of degree
  at most $(\ell,\ell)$. 
\end{theorem}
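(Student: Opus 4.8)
The plan is to exhibit, for every admissible rational function $r$, an explicit matrix $Y$ of rank at most $k\ell$ that approximates $X$ with relative error $\max_E|r|/\min_F|r|$, and then to invoke the Eckart--Young theorem and optimise over $r$. First I would rewrite the equation in standard Sylvester form: setting $\hat B:=-B$, the identity $AX+XB=C$ becomes $AX-X\hat B=C$, where now $\mathrm{spec}(A)\subseteq E$ and $\mathrm{spec}(\hat B)\subseteq F$, and $E\cap F=\emptyset$ guarantees solvability. Fix $r\in\mathcal R_{\ell,\ell}$; we may assume $r$ has no poles on $E$ and does not vanish on $F$, since otherwise $\max_E|r|/\min_F|r|=+\infty$ and there is nothing to prove. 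In particular $r(A)$ is well defined and $r(\hat B)$ is invertible.

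The core of the argument is a rank bound on the \emph{displacement} $D:=r(A)X-Xr(\hat B)$. Writing the partial fraction expansion $r(z)=r_\infty+\sum_{i=1}^{\ell}\frac{c_i}{z-b_i}$ (simple poles for the moment), the constant term cancels and each summand simplifies through the resolvent identity
\[
 (A-b_iI)^{-1}X-X(\hat B-b_iI)^{-1}
 =(A-b_iI)^{-1}\bigl[X(\hat B-b_iI)-(A-b_iI)X\bigr](\hat B-b_iI)^{-1}
 =-(A-b_iI)^{-1}C(\hat B-b_iI)^{-1},
\]
where we used $X\hat B-AX=-C$. Hence
\[
 D=-\sum_{i=1}^{\ell}c_i\,(A-b_iI)^{-1}C(\hat B-b_iI)^{-1},
\]
and since each term has rank at most $\mathrm{rank}(C)=k$, we obtain $\mathrm{rank}(D)\le k\ell$. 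This is the step I expect to be the crux: a direct manipulation $r(A)X-Xr(\hat B)=q(A)^{-1}(\cdots)q(\hat B)^{-1}$ only yields the pessimistic bound $2k\ell$, and it is precisely the resolvent form, with $C$ sandwiched between the two resolvents, that brings the count down to $k\ell$. Poles of higher multiplicity are handled by the same telescoping identity, a pole of order $m$ contributing rank at most $mk$ (the columns of all the corresponding terms lie in $\mathrm{span}\{(A-b_iI)^{-j}C\}_{j=1}^{m}$); since the total pole multiplicity is at most $\deg q\le\ell$, the bound $\mathrm{rank}(D)\le k\ell$ persists.

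Finally I would turn the displacement bound into an approximation of $X$. Set $Y:=-D\,r(\hat B)^{-1}$, which still has rank at most $k\ell$. A direct computation gives
\[
 X-Y=X+\bigl(r(A)X-Xr(\hat B)\bigr)r(\hat B)^{-1}=r(A)\,X\,r(\hat B)^{-1},
\]
so that, using the normality of $A$ and $\hat B$ (which makes $\norm{r(A)}_2=\max_{\lambda\in\mathrm{spec}(A)}|r(\lambda)|$ and $\norm{r(\hat B)^{-1}}_2=1/\min_{\mu\in\mathrm{spec}(\hat B)}|r(\mu)|$),
\[
 \norm{X-Y}_2\le\norm{r(A)}_2\,\norm{r(\hat B)^{-1}}_2\,\norm{X}_2
 \le\frac{\max_{x\in E}|r(x)|}{\min_{y\in F}|r(y)|}\,\norm{X}_2.
\]
Since $Y$ has rank at most $k\ell$ and $\sigma_1(X)=\norm{X}_2$, the Eckart--Young theorem yields $\sigma_{1+k\ell}(X)\le\norm{X-Y}_2$, whence $\sigma_{1+k\ell}(X)/\sigma_1(X)\le\max_E|r|/\min_F|r|$. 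Taking the infimum over all admissible $r\in\mathcal R_{\ell,\ell}$ gives exactly $Z_\ell(E,F)$ and completes the proof.
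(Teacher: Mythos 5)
Your proof is correct, and it is essentially the argument behind the result as it appears in the literature: the paper itself does not prove Theorem~\ref{thm:zol1} but quotes it from Theorem~2.1 of \cite{beckermann2016}, whose proof rests on exactly your three ingredients --- the displacement rank bound $\mathrm{rank}\bigl(r(A)X - Xr(-B)\bigr) \leq k\ell$ obtained from partial fractions and the resolvent identity (with $C$ sandwiched between the two resolvents, which is indeed the step that yields $k\ell$ rather than $2k\ell$), normality of $A$ and $B$ to convert $\norm{r(A)}_2$ and $\norm{r(-B)^{-1}}_2$ into $\sup_E \abs{r}$ and $1/\inf_F \abs{r}$, and Eckart--Young applied to $X - Y = r(A)\,X\,r(-B)^{-1}$. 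Two small repairs are worth noting. First, your partial-fraction expansion tacitly assumes $\deg p \leq \deg q$ for $r = p/q$; an element of $\mathcal R_{\ell,\ell}$ with $\deg p > \deg q$ carries a nonconstant polynomial part $s$ of degree $d = \deg p - \deg q$, whose displacement satisfies $s(A)X - Xs(\hat B) = \sum_j s_j \sum_{i=0}^{j-1} A^i C \hat B^{\,j-1-i}$ and hence has rank at most $dk$; since $d + \deg q \leq \ell$, the total bound $k\ell$ survives, by the same telescoping you already invoke for higher-order poles. Second, $r$ may have a pole at an eigenvalue of $\hat B = -B$ without the ratio $\max_E\abs{r}/\min_F\abs{r}$ being $+\infty$ (such a pole lies in $F$, not $E$), in which case $r(\hat B)$ and the resolvents $(\hat B - b_i I)^{-1}$ are undefined; this is handled by perturbing the offending poles off the (finite) spectrum and passing to the limit, which does not change the infimum. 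Finally, the paper's declared ordering $\sigma_1 \leq \sigma_2 \leq \dots$ is inconsistent with the statement itself; your implicit reading $\sigma_1(X) = \norm{X}_2$ (decreasing order, as in \cite{beckermann2016}) is the intended one.
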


Theorem \ref{thm:zol1} provides useful information only if one manages to choose the sets $E$ and $F$ well separated. 
%We are interested in describing a few of these cases. 
In general it is difficult to explicitly bound $Z_\ell(E, F)$, but
some results exist for specific choices of domains, especially when
$E$ and $F$ are real intervals, see for instance \cite{guttel-polizzi,beckermann2016}.
The combination of these results with Theorem~\ref{thm:zol1} proves the well-known fact that a Sylvester equation with
positive definite coefficients and with a low rank right-hand side has a numerically low-rank solution.

\begin{lemma}\label{lem:sylv-bound}
  Let $A,B$ be symmetric positive definite matrices with
  spectrum contained in $[a,b]$, $0<a<b$. Consider the Sylvester equation
  $AX + XB = C$,
  with $C$ of rank $k$. Then the solution
  $X$ satisfies
  \[
    \frac{\sigma_{1 + k\ell}(X)}{\sigma_1(X)} \leq 4\rho^{-2\ell}
    \]
    where $\rho=\operatorname{exp}\left(\frac{\pi^2}{2\mu(\frac ba)}\right)$ and $\mu(\cdot)$ is the Gr\"otzsch ring function
      \[
      \mu(\lambda):= \frac{\pi}{2} \frac{K(\sqrt{1-\lambda^2})}{K(\lambda)},\qquad K(\lambda):=\int_0^1\frac{1}{(1-t^2)(1-\lambda^2t^2)}dt.
      \]
\end{lemma}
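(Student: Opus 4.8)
The plan is to realize the bound as a direct instance of Theorem~\ref{thm:zol1} followed by an explicit estimate of the resulting Zolotarev number. Since $A$ and $B$ are symmetric positive definite, they are in particular normal, with real spectra contained in $[a,b]$. Hence I would invoke Theorem~\ref{thm:zol1} with the choices $E = [a,b]$ (which contains the spectrum of $A$) and $F = [-b,-a]$ (which contains the spectrum of $-B$); these two sets are disjoint precisely because $a>0$, which is exactly where positive definiteness enters. Theorem~\ref{thm:zol1} then yields
\[
\frac{\sigma_{1+k\ell}(X)}{\sigma_1(X)} \le Z_\ell([a,b],[-b,-a]),
\]
so the entire statement reduces to proving the estimate $Z_\ell([a,b],[-b,-a]) \le 4\rho^{-2\ell}$.

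The core of the argument is therefore the evaluation of the Zolotarev number for the two intervals $[a,b]$ and $[-b,-a]$. First I would normalize by the scaling $x \mapsto x/a$, which leaves $Z_\ell$ invariant and shows that the quantity depends only on the ratio $R := b/a$, reducing the configuration to $E = [1,R]$, $F = [-R,-1]$. I would then exploit the symmetry $F = -E$: this symmetry forces the extremal rational function to satisfy a relation of the form $r(x)\,r(-x)\equiv\mathrm{const}$, so that $\min_F|r| = \mathrm{const}/\max_E|r|$ and the ratio $\max_E|r|/\min_F|r|$ collapses to a single symmetric extremal problem on $[1,R]$. This symmetric, two-interval (``sign-function'') Zolotarev problem is one of the classical cases solved by Zolotarev in closed form through Jacobi elliptic functions.

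To obtain the explicit constant I would pass to the conformal description: the Zolotarev number for two real intervals decays geometrically at a rate governed by the conformal modulus of the doubly connected domain $\overline{\mathbb C}\setminus(E\cup F)$. Mapping the four endpoints $\{-R,-1,1,R\}$ to a canonical position by a M\"obius transformation, this modulus can be expressed through complete elliptic integrals, and for the present symmetric configuration it coincides exactly with the Gr\"otzsch ring function $\mu(R)=\mu(b/a)$. The classical sharp bounds on Zolotarev numbers, obtained from theta-function estimates for the extremal rational function, then give $Z_\ell \le 4\rho^{-2\ell}$ with $\rho = \exp\!\left(\pi^2/(2\mu(b/a))\right)$, which is precisely the claimed inequality. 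Alternatively, this explicit evaluation for real intervals can be quoted directly from \cite{beckermann2016,guttel-polizzi} and specialized to the symmetric pair $[a,b]$, $[-b,-a]$.

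The main obstacle is this last step: identifying the elliptic modulus of the endpoint configuration with $\mu(b/a)$ and converting the closed-form Zolotarev solution into the clean upper bound with the factor $4$ and the squared rate $\rho^{-2\ell}$. Everything preceding it---the reduction through Theorem~\ref{thm:zol1}, the scaling to the parameter $b/a$, and the symmetrization---is routine; the genuine work lies in the elliptic-function computation, which is exactly the ingredient imported from the cited references.
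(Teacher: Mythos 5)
Your proposal is correct and takes essentially the same route as the paper: the paper's proof likewise applies Theorem~\ref{thm:zol1} with $E=[a,b]$ and $F=[-b,-a]$ and then quotes Corollary~3.2 of \cite{beckermann2016} for the bound $Z_\ell([a,b],[-b,-a])\leq 4\rho^{-2\ell}$. Your additional sketch of the elliptic-function evaluation of the Zolotarev number is an accurate outline of what that cited result contains, but the paper simply imports it by citation, exactly as your fallback option suggests.
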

\begin{proof}
Applying Theorem~\ref{thm:zol1} with $E=[a,b]$ and $F=[-b,-a]$ we get
\[
\frac{\sigma_{1 + k\ell}(X)}{\sigma_1(X)} \leq Z_\ell(E,F).
\]
Using Corollary 3.2 in \cite{beckermann2016} for bounding $Z_\ell(E,F)$ we get the claim.
\end{proof}
\begin{remark}
A slightly weaker bound which does not involve elliptic functions is the following~\cite{beckermann2016}
\[
Z_\ell([a,b],[-b,-a])\leq 4\rho^{-2\ell},\qquad 
\rho=\operatorname{exp}\left(\frac{\pi^2}{2\log\left(4\frac ba\right)}\right),\qquad 0<a<b<\infty.
\]  
\end{remark}

It is easy to see that in case of Lyapunov equations with symmetric
positive definite coefficients we can replace the quantity $\frac ba$
with the condition number of $A$.
\begin{corollary}\label{lem:lyap-bound}
  Let $A$ be a symmetric positive definite matrix with
  condition number $\kappa_A$, and consider the Lyapunov equation
  $AX + XA = C$, with $C$ of rank $k$. Then the solution
  $X$ satisfies
  \[
    \frac{\sigma_{1 + k\ell}(X)}{\sigma_1(X)} \leq 4\rho^{-2\ell}
  \]
  where $\rho=\operatorname{exp}\left(\frac{\pi^2}{2\mu(\kappa_A)}\right)$ and $\mu(\cdot)$ is defined as 
  in Lemma~\ref{lem:sylv-bound}.
\end{corollary}

We are now interested in proving that the solution of a Sylvester
equation with low-order quasiseparable data is numerically
quasiseparable.  An analogous task had been addressed in
\cite{grasedyck-existence}.  The approach developed by the authors can
be used for estimating either the rank of $X$ in the case of a
low-rank right-hand side or the rank of the off-diagonal blocks of $X$
when the coefficients are hierarchical matrices.  In particular, it
has been shown that if the coefficients are efficiently represented by
means of the hierarchical format then also the solution shares this
property. The estimates provided in \cite{grasedyck-existence} exploit
the convergence of a numerical integrating scheme for evaluating the
closed integral formula \eqref{eq:integral}. These bounds are however
quite implicit, and 
are more pessimistic than the estimates provided in
\cite{penzl}, and in \cite{sorensen2002bounds} for the case of
a low-rank right hand-side (which is the setting where
all the previous results are applicable). 

Here, we directly characterize the off-diagonal singular values of the solution applying Theorem~\ref{thm:zol1} block-wise. 
\begin{theorem}
  \label{thm:quasisep-sol-normal}
  Let $A$ and $B$ be  symmetric positive definite matrices of quasiseparable
  rank $k_A$ and $k_B$, respectively, and
  suppose that the spectra of $A$ and $B$ are both contained in the interval $[a,b]$. Then,
  if $X$ solves the Sylvester equation $AX + XB = C$, with $C$ of
  quasiseparable rank $k_C$, a generic off-diagonal block $Y$ of
  $X$ satisfies
  \[
    \frac{\sigma_{1 + k\ell}(Y)}{\sigma_1(Y)} \leq 4\rho^{-2\ell},
  \]
  where $k:=k_A+k_B+k_C$,
 $\rho=\operatorname{exp}\left(\frac{\pi^2}{2\mu(\frac ba)}\right)$ and $\mu(\cdot)$ is defined as in Lemma~\ref{lem:sylv-bound}.
\end{theorem}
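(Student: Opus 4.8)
The plan is to realise each off-diagonal block of $X$ as the solution of an auxiliary Sylvester equation that again satisfies the hypotheses of Lemma~\ref{lem:sylv-bound}, with a right-hand side whose rank does not exceed $k=k_A+k_B+k_C$; the stated decay then follows by invoking that lemma verbatim. To set this up, I would fix a splitting index $p$ and partition $A,B,X,C$ conformably into $2\times 2$ block matrices, so that the generic lower off-diagonal block is $Y=X_{21}=X(p+1:n,1:p)$ (the upper blocks being entirely symmetric). Extracting the $(2,1)$ block of the identity $AX+XB=C$ gives
\[
  A_{21}X_{11}+A_{22}X_{21}+X_{21}B_{11}+X_{22}B_{21}=C_{21},
\]
and rearranging shows that $Y$ itself solves
\[
  A_{22}\,Y+Y\,B_{11}=\widehat{C},\qquad
  \widehat{C}:=C_{21}-A_{21}X_{11}-X_{22}B_{21}.
\]

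Two facts then have to be checked. First, the coefficients $A_{22}$ and $B_{11}$ are principal submatrices of $A$ and $B$, hence symmetric positive definite; by the Rayleigh-quotient characterisation (equivalently, Cauchy interlacing) their eigenvalues stay within the spectral range of the full matrices, so their spectra remain contained in $[a,b]$, and Lemma~\ref{lem:sylv-bound} applies to the auxiliary equation with the very same interval $[a,b]$, and thus the same constant $\rho$. Second, I would bound the rank of the new right-hand side term by term: $C_{21}$ is an off-diagonal block of $C$ and so has rank at most $k_C$; $A_{21}$ is an off-diagonal block of $A$, whence $A_{21}X_{11}$ has rank at most $k_A$; and likewise $X_{22}B_{21}$ has rank at most $k_B$. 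Subadditivity of the rank gives $\mathrm{rank}(\widehat{C})\le k_A+k_B+k_C=k$. Applying Lemma~\ref{lem:sylv-bound} to $A_{22}Y+YB_{11}=\widehat{C}$ produces exactly $\sigma_{1+k\ell}(Y)/\sigma_1(Y)\le 4\rho^{-2\ell}$.

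The only step demanding real care is the rank count of $\widehat{C}$. It comes out to precisely $k_A+k_B+k_C$ because, for a corner block $Y=X(p+1:n,1:p)$, the complementary index set is a single contiguous half-line, so each correction term $A_{21}X_{11}$ and $X_{22}B_{21}$ involves exactly one off-diagonal block of $A$ or $B$. For a contiguous off-diagonal block that is \emph{not} anchored at a corner, the complement splits into two pieces and one would instead pick up two off-diagonal blocks of $A$ (or $B$), giving a marginally larger constant; since the quasiseparable rank of $X$ is already attained on the corner blocks, it is enough to argue on those, and no generality is lost. The spectral containment for principal submatrices is standard and causes no trouble, so the argument reduces entirely to the block-extraction identity above together with a single application of Lemma~\ref{lem:sylv-bound}.
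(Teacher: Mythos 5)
Your proof is correct and follows essentially the same route as the paper's own argument: extract the $(2,1)$ block of $AX+XB=C$ so that $Y=X_{21}$ solves $A_{22}Y+YB_{11}=C_{21}-A_{21}X_{11}-X_{22}B_{21}$, bound the right-hand side's rank by $k_A+k_B+k_C$ via subadditivity, note that the principal submatrices $A_{22},B_{11}$ remain symmetric positive definite with spectra in $[a,b]$, and invoke Lemma~\ref{lem:sylv-bound}. Your closing remark on non-corner blocks addresses the same ``without loss of generality'' reduction the paper makes implicitly (any submatrix strictly below the diagonal sits inside a corner block), so nothing further is needed.
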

\begin{proof}
Consider the following block partitioning for the Lyapunov equation
\[
  \begin{bmatrix}
    A_{11} & A_{12} \\
    A_{21} & A_{22} \\
  \end{bmatrix}
  \begin{bmatrix}
    X_{11} & X_{12} \\
    X_{21} & X_{22} \\
  \end{bmatrix} +
\begin{bmatrix}
    X_{11} & X_{12} \\
    X_{21} & X_{22} \\
  \end{bmatrix}
  \begin{bmatrix}
    B_{11} & B_{12} \\
    B_{21} & B_{22} \\
  \end{bmatrix} =
  \begin{bmatrix}
    C_{11} & C_{12} \\
    C_{21} & C_{22} \\    
  \end{bmatrix}. 
\]
where the off-diagonal blocks --- in each matrix --- do not involve any elements of the main diagonal and all the dimensions 
are compatible.
Without loss of generality we can consider the case $Y=X_{21}$. Observe that, writing the above system block-wise we get the
following relation
\[
A_{21} X_{11} + A_{22} X_{21} + X_{21} B_{11} + X_{22} B_{21} = C_{21}.
\]
In particular the block $X_{21}$ solves the Sylvester equation
\[
A_{22} X_{21} + X_{21} B_{11}  = C_{21}-A_{21} X_{11} - X_{22} B_{21},
\]
in which the right-hand side has (standard) rank bounded by $k$. Since $A_{22}$ and $B_{11}$
are principal submatrices of symmetric positive definite matrices, they are again symmetric positive definite and such that 
$\kappa_2(A_{22}) \leq \frac ba$, and
$\kappa_2(B_{11}) \leq \frac ba$.  Therefore, using
Lemma~\ref{lem:sylv-bound} we get the claim.
\end{proof}

\begin{remark} \label{rem:bandedrank}
	In the case where $A,B$ and $C$ are banded with bandwidth $k_A,k_B$ and $k_C$, respectively, one can refine the bound given in Theorem~\ref{thm:quasisep-sol-normal} by using 
	$k := \max\{ k_A + k_B, k_C \}$. Indeed, being $A_{21}$ the off-diagonal block of a banded matrix, it has a rows generator with non zero entries only in the first $k_A$ rows. Analogously, the non zero entries of the columns generator corresponding to $B_{21}$, are located in its last $k_B$ rows. Finally, non zero entries of $C_{21}$ are in the $k_C$ diagonals located in the upper right corner. 
	Therefore, the matrix
	$
	C_{21}-A_{21}X_{11}-X_{22}B_{21}
	$
	has non zero elements only on the first $k_A$ rows, in the last $k_B$ columns and in the $k_C$ upper right corner diagonals, see Figure~\ref{fig:diagonals}. This provides the upper bound $\max\{k_A+k_B,k_C\}$ for its rank.
	\end{remark}
	
In Figure~\ref{fig:bound} we compare the bound given in Theorem~\ref{thm:quasisep-sol-normal} with the off-diagonal singular
values of the solution.
In this experiment the matrix $C\in\mathbb R^{n\times n}$, $n=300$, is diagonal with random entries and $A=B=MM^T$ where
$M\in\mathbb R^{n\times n}$ is bidiagonal with ones on the main diagonal and random elements -- chosen in $(0,1)$ -- 
in the subdiagonal. The theoretical bound manages to describe the superlinear decay of the off-diagonal singular values. 
On the other hand, there is a significant gap between this estimate and the real behavior of the singular values.
This is due to the fact that we are bounding the quantity $Z_\ell(E,F)$ where $E$ and $F$ are the convex hull of 
the spectra of
$A$ and $-B$ respectively, instead of considering the Zolotarev problem directly on the discrete spectra. This is done in order to 
find explicit bounds but it can cause an overestimation as outlined in \cite{beck-gryson}. 

\begin{figure} 
	\centering
	\begin{tikzpicture}[scale=0.8]
		\draw (0,0) rectangle (2.0,2.0); \node at (1,1) {$C_{21}$}; 
		\fill[gray] (1.2,2) -- (2,1.2) -- (2,2);
		
		\node at (2.5,1) {$-$};
		
		\draw (3.0,0) rectangle (5,2); \node at (4,1) {$A_{21} X_{11}$};
		\fill[gray] (3,1.7) rectangle (5,2);
		
		\node at (5.5,1) {$-$};
		
		\draw (6,0) rectangle (8,2); \node at (7,1) {$X_{22} B_{21}$};
		\fill[gray] (7.8,0) rectangle (8,2);
		
		\node at (9,1) {$=$};
		
		\draw (10,0) rectangle (12,2); 
		\fill[gray] (10,1.7) rectangle (12,2);
		\fill[gray] (11.8,0) rectangle (12,2);
		\fill[gray] (11.2,2) -- (12,1.2) -- (12,2);
	\end{tikzpicture}
	\caption{Sparsity structure of the equation for the off-diagonal block
		$X_{21}$ when $A,B$, and $C$ are banded matrices. As described in 
		Remark \ref{rem:bandedrank} the rank of the right handside is bounded by
		$\max\{k_A+k_B,k_C\}$.}
	\label{fig:diagonals}
\end{figure}
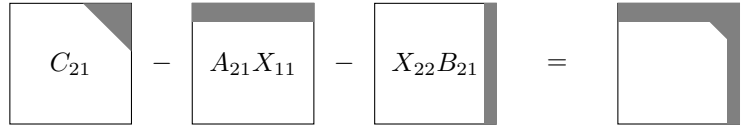

\begin{figure}\label{fig:bound}
    \centering
    \begin{tikzpicture}
      \begin{semilogyaxis}[ylabel = $\sigma_j$ , xlabel = $j$, width = .8\linewidth, height=.3\textheight]     
        \addplot [skip coords between index={13}{150}, mark = *, color = blue] table[x index = 0, y index = 1] {random.dat};
        \addplot table[x index = 0, y index = 2] {random.dat};
        \legend{Off-diagonal singular values of $X$, Bound from Theorem~\ref{thm:quasisep-sol-normal}};
      \end{semilogyaxis}
    \end{tikzpicture} 
    \caption{Off-diagonal singular values in the solution $X$ to \eqref{eq:lyap} where $C$ is a random diagonal 
    matrix and $A=B=MM^T$ with $M$ bidiagonal matrix with ones on the main diagonal and random elements -- chosen in $(0,1)$ --
    in the subdiagonal. 
    The dimension of the matrices is $n\times n$ with $n=300$. 
    The blue dots represent the most significant singular values of the off-diagonal block $X(\frac n2+1:n,1:\frac n2)$.
    The red squares represent the theoretical bound given by Theorem~\ref{thm:quasisep-sol-nonnormal}.}
    \end{figure}
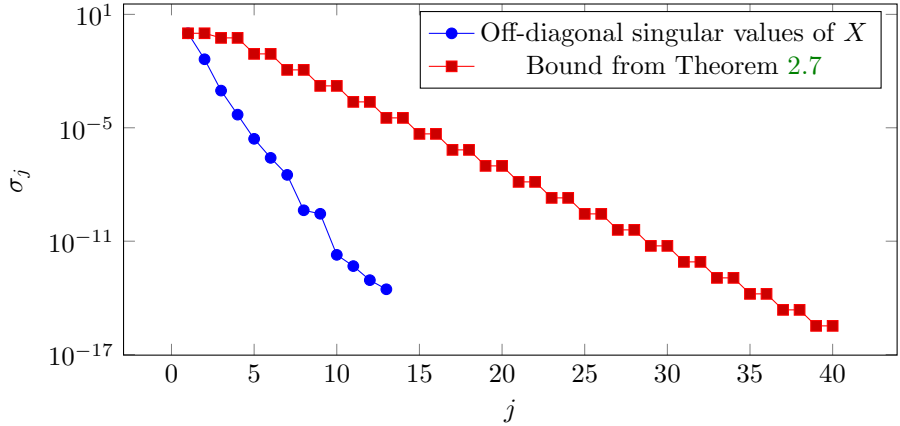
    
A key property in the proof of Theorem~\ref{thm:quasisep-sol-normal} is the fact that submatrices
of positive definite matrices
are better conditioned than the original ones. This
is an instance of a more general situation, which we can use
to characterize the solution of Sylvester equations with non-normal
coefficients.

\begin{definition}
  Given an $n \times n$ square matrix $A$ we say that its \emph{field
    of values} is the subset of the complex plane defined as
  follows:
  \[
    \mathcal W(A) := \left\{ \frac{x^H A x}{x^H x} \ \Big| \ x \in \mathbb {C}^n \backslash \{ 0 \} \right\}. 
  \]
\end{definition}

One can easily check that for a normal matrix, being unitarily diagonalizable,
the field of values is just the convex hull of the eigenvalues. For a general
matrix, we know that the spectrum is contained in $\mathcal W(A)$, but
the latter can be strictly larger than the convex hull of the former.

\begin{lemma}\label{lem:proj}
  Let $P$ be an orthogonal projection, i.e., a $n \times k$ matrix, $k < n$, with orthonormal columns. 
  Then, for any matrix $A$,
  $\mathcal W(P^H A P) \subseteq \mathcal W(A)$. In particular, the
  field of values of any principal submatrix of $A$ is contained in $\mathcal W(A)$. 
\end{lemma}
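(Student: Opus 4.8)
The plan is to prove the inclusion $\mathcal{W}(P^H A P) \subseteq \mathcal{W}(A)$ directly from the definition of the field of values, and then deduce the statement about principal submatrices as a special case. The key observation is that the field of values $\mathcal{W}(P^H A P)$ is defined via a quotient $\frac{y^H (P^H A P) y}{y^H y}$ over nonzero vectors $y \in \mathbb{C}^k$, and I want to realize each such quotient as a quotient of the same form for $A$ itself.

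First I would take an arbitrary nonzero $y \in \mathbb{C}^k$ and set $x := Py \in \mathbb{C}^n$. The crucial point is that $P$ has orthonormal columns, so $P^H P = I_k$, which gives $x^H x = y^H P^H P y = y^H y$. In particular, since $y \neq 0$ and $P$ has full column rank, $x \neq 0$, so $x$ is a legitimate vector in the definition of $\mathcal{W}(A)$. Then a one-line computation shows
\[
  \frac{y^H (P^H A P) y}{y^H y} = \frac{(Py)^H A (Py)}{(Py)^H (Py)} = \frac{x^H A x}{x^H x},
\]
and the right-hand side is, by definition, an element of $\mathcal{W}(A)$. Since $y$ was arbitrary, this proves $\mathcal{W}(P^H A P) \subseteq \mathcal{W}(A)$.

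For the final assertion, I would note that any principal submatrix of $A$ is obtained by selecting a subset of indices, which corresponds exactly to choosing $P$ to be the matrix whose columns are the standard basis vectors $e_{i_1}, \dots, e_{i_k}$ associated with those indices. Such a $P$ has orthonormal columns, and $P^H A P$ is precisely the corresponding principal submatrix. The inclusion for principal submatrices is therefore an immediate consequence of the general projection statement.

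I do not expect any genuine obstacle here: the result is essentially a restatement of the fact that the field of values is defined through normalized quadratic forms, and compressing by an isometry onto a subspace can only shrink the accessible set of such forms. The only point that requires a moment of care is verifying that $x = Py$ is nonzero whenever $y$ is, which follows from $P$ having orthonormal (hence linearly independent) columns; this guarantees we never divide by zero and that each compressed quotient is indeed realized by an admissible vector for $A$.
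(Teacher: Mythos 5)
Your proof is correct and follows essentially the same route as the paper: both arguments hinge on the substitution $x = Py$ together with $P^HP = I_k$, so that every Rayleigh quotient of $P^HAP$ is realized as a Rayleigh quotient of $A$. If anything, your pointwise set-inclusion phrasing is slightly more careful than the paper's one-line max-based display, since $\mathcal W(A)$ is a set of complex numbers rather than a single extremal value.
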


\begin{proof}
  The result directly comes by observing that
  \[
    \max_{y \in \mathbb{C}^{k}} \frac{y^H P^H A P y}{y^H y} =
    \max_{y \in \mathbb{C}^{k}} \frac{y^H P^H A P y}{y^H P^H P y} 
    \overbrace{\leq}^{x = Py} \max_{x \in \mathbb{C}^{n}} \frac{x^H A x}{x^H x}.% = \mathcal W(A). 
  \]
\end{proof}

\begin{lemma}[Crouzeix \cite{last-crouzeix}]\label{lem:crouzeix}
  Let $A$ be any $n \times n$ matrix, and $f(z)$ an holomorphic
  function defined on $\mathcal W(A)$. Then,
  \[
    \norm{f(A)}_2 \leq \mathcal C \max_{z \in \mathcal W(A)} |f(z)|, 
  \]
  where $\mathcal C$ is a universal constant smaller or equal than $1+\sqrt 2$. 
\end{lemma}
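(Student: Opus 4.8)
This is Crouzeix's theorem, in the sharp form due to Crouzeix and Palencia; it is a deep result that the paper cites rather than reproves, so what follows is a sketch of the strategy I would follow, which is essentially the Crouzeix--Palencia argument. The plan is to pass to a convex smooth domain containing $\mathcal W(A)$, introduce an auxiliary Cauchy-transform function, and then combine a geometric estimate with an operator positivity statement to extract the constant $1+\sqrt2$.

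First I would reduce to a convenient setting. By approximation it suffices to treat $f$ holomorphic in a neighborhood of $\overline\Omega$, where $\Omega$ is a bounded convex open set with analytic boundary and $\mathcal W(A)\subseteq\Omega$; after normalizing we may assume $\max_{z\in\overline\Omega}|f(z)|=1$. On such a domain the holomorphic functional calculus is represented by the Cauchy integral
\[
  f(A)=\frac{1}{2\pi i}\int_{\partial\Omega}f(s)(sI-A)^{-1}\,ds,
\]
which is the only point where the resolvent of $A$ enters.

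The heart of the argument is the auxiliary function
\[
  g(z):=\frac{1}{2\pi i}\int_{\partial\Omega}\frac{\overline{f(s)}}{s-z}\,ds,\qquad z\in\Omega,
\]
the Cauchy transform of the boundary values of $\overline f$, which is holomorphic in $\Omega$. Two facts drive the proof. The first is a geometric estimate: exploiting the convexity of $\Omega$ one shows $\norm{g}_{\partial\Omega}\le\norm{f}_{\partial\Omega}\le1$. The second is an operator positivity statement: using $\mathcal W(A)\subseteq\Omega$ together with the special integral structure of $g$, one proves $\operatorname{Re}\langle f(A)x,\,g(A)^Hx\rangle\ge0$ for every $x$, i.e.\ that the Hermitian part of $(fg)(A)$ is positive semidefinite. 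Expanding $\norm{f(A)x+g(A)^Hx}^2$ this gives $\norm{f(A)x}\le\norm{f(A)x+g(A)^Hx}$, hence $\norm{f(A)}\le\norm{f(A)+g(A)^H}$.

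It then remains to bound $\norm{f(A)+g(A)^H}$ directly (applying the theorem to $g$ would be circular). Estimating this combined operator through the boundary values of $f$ and $g$ and using the two facts above produces a quadratic inequality for $\norm{f(A)}$ whose relevant root is $1+\sqrt2$, giving $\norm{f(A)}_2\le(1+\sqrt2)\max_{z\in\mathcal W(A)}|f(z)|$. I expect the two genuinely non-routine steps to be exactly these. The sharp convexity bound on the Cauchy transform $g$ is delicate but geometric. More subtle is the positivity $\operatorname{Re}(fg)(A)\succeq0$: it cannot be deduced from $\operatorname{Re}(fg)\ge0$ on $\Omega$ alone, since the numerical range is not mapped forward by holomorphic functions, and must instead be wrung out of the explicit integral representation of $g$ and the constraint $\mathcal W(A)\subseteq\Omega$. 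By comparison, the reduction to smooth domains and the functional-calculus bookkeeping are routine.
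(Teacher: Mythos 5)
The paper offers no proof to compare against: this lemma is imported wholesale from \cite{last-crouzeix} (the Crouzeix--Palencia theorem), so the benchmark is that paper, and your scaffolding does match it at the structural level --- reduction to a smooth bounded convex $\Omega\supseteq\mathcal W(A)$, the Cauchy-integral functional calculus, the auxiliary function $g$ defined as the Cauchy transform of $\overline f$, the convexity estimate $\norm{g}_{\partial\Omega}\leq\norm{f}_{\partial\Omega}$, and a terminal quadratic inequality whose relevant root is $1+\sqrt2$.

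However, your central ``operator positivity'' step is a genuine gap: the claim $\operatorname{Re}\langle f(A)x,\,g(A)^Hx\rangle\geq 0$, i.e.\ positive semidefiniteness of the Hermitian part of $(fg)(A)$, is false, and it proves too much. If it held, then combined with $\norm{f(A)+g(A)^H}_2\leq 2\max_{\overline\Omega}|f|$ --- which \emph{is} true and is the actual core of Crouzeix--Palencia --- your expansion of $\norm{f(A)x+g(A)^Hx}^2$ would give $\norm{f(A)}_2\leq 2\max|f|$, i.e.\ the full Crouzeix conjecture, which the paper itself records as open immediately after the lemma. It also fails concretely: for $\Omega$ the unit disc, the Cauchy transform of $\overline f$ kills all negative Fourier modes, so $g\equiv\overline{f(0)}$; take $A=\left(\begin{smallmatrix}0&3/2\\0&0\end{smallmatrix}\right)$, whose numerical range is the closed disc of radius $3/4$, and the disc automorphism $f(z)=\frac{1/2-z}{1-z/2}$. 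Since $A^2=0$, one computes $f(A)=\frac12 I-\frac34 A$ and $(fg)(A)=\frac12 f(A)$, whose Hermitian part $\frac12\bigl(\frac12 I-\frac38(A+A^H)\bigr)$ has eigenvalues $\frac12\bigl(\frac12\mp\frac{9}{16}\bigr)$, one of which is negative. In Crouzeix--Palencia the positivity extracted from convexity and $\mathcal W(A)\subseteq\Omega$ sits elsewhere: for each $\sigma\in\partial\Omega$ the operator density $\nu(\sigma)$ defined by $\nu(\sigma)\,|d\sigma|=\frac{1}{2\pi i}(\sigma I-A)^{-1}d\sigma+\bigl(\frac{1}{2\pi i}(\sigma I-A)^{-1}d\sigma\bigr)^H$ is positive semidefinite (the supporting-half-plane property of the convex set $\Omega$ puts the numerical range of a rotated $\sigma I-A$ in a half-plane, and $B^{-1}+B^{-H}=B^{-1}(B+B^H)B^{-H}$), with $\int_{\partial\Omega}\nu(\sigma)|d\sigma|=2I$. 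This yields the representation $f(A)+g(A)^H=\int_{\partial\Omega}f(\sigma)\nu(\sigma)|d\sigma|$ and hence $\norm{f(A)+g(A)^H}_2\leq 2$ by Cauchy--Schwarz; the constant $1+\sqrt2$ then emerges from a maximizing-vector manipulation of the cross term using only $\norm{g}_{\partial\Omega}\leq 1$, giving a quadratic of the form $\norm{f(A)}_2^2\leq 2\norm{f(A)}_2+1$. Ransford and Schwenninger later showed that these two lemmas alone cannot yield anything better than $1+\sqrt2$, so your positivity claim is not merely unproved --- it is incompatible with this framework, and repairing the sketch means replacing it with the kernel-level positivity above.
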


The above result is conjectured to be true with $\mathcal C = 2$, and in this
form is often referred to as the \emph{Crouzeix conjecture}
\cite{crouzeix2007numerical}. Lemma~\ref{lem:proj}-\ref{lem:crouzeix} can be exploited
to obtain a generalization of Theorem~\ref{thm:quasisep-sol-normal}.

\begin{theorem}
  \label{thm:quasisep-sol-nonnormal}
  Let $A, B$ be matrices of quasiseparable
  rank $k_A$ and $k_B$ respectively and such that $\mathcal W(A) \subseteq E$ and 
  $\mathcal W(-B) \subseteq F$. Consider 
  the Sylvester equation $AX + XB = C$, with $C$ of
  quasiseparable rank $k_C$. Then a generic off-diagonal block $Y$ of
  the solution $X$ satisfies
  \[
    \frac{\sigma_{1 + k\ell}(Y)}{\sigma_1(Y)} \leq \mathcal C^2\cdot Z_{\ell}(E, F), \qquad k:=k_A+k_B+k_C. 
  \]
\end{theorem}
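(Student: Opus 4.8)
The plan is to mimic the proof of Theorem~\ref{thm:quasisep-sol-normal} almost verbatim, replacing the two places where normality and the explicit Zolotarev bound were used. First I would introduce the same $2\times 2$ block partitioning of the equation $AX+XB=C$ in which the off-diagonal blocks do not touch the main diagonal, and without loss of generality take $Y=X_{21}$. Writing the system block-wise yields exactly the same identity
\[
A_{22} X_{21} + X_{21} B_{11} = C_{21}-A_{21} X_{11} - X_{22} B_{21},
\]
so the block $Y=X_{21}$ itself solves a Sylvester equation whose right-hand side has standard rank at most $k=k_A+k_B+k_C$. That combinatorial rank count is purely algebraic and does not depend on normality, so it carries over unchanged.

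The second ingredient is to feed this per-block Sylvester equation into Theorem~\ref{thm:zol1}. To do so I need the spectra (not the fields of values) of $A_{22}$ and $-B_{11}$ to sit inside $E$ and $F$. This follows from Lemma~\ref{lem:proj}: since $A_{22}=P^H A P$ for the coordinate projection $P$ onto the index set defining the block, we have $\mathcal W(A_{22})\subseteq\mathcal W(A)\subseteq E$, and because the spectrum of any matrix is contained in its field of values, $\operatorname{spec}(A_{22})\subseteq E$; likewise $\operatorname{spec}(-B_{11})\subseteq F$. Here I would want to be a little careful that Theorem~\ref{thm:zol1} as stated assumes $A,B$ normal, whereas $A_{22},B_{11}$ need not be normal; the honest route is to invoke the version of Zolotarev's bound valid for non-normal matrices whose fields of values lie in $E$ and $F$, which is precisely what Crouzeix's inequality is designed to repair. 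Applying Theorem~\ref{thm:zol1} to the block gives $\sigma_{1+k\ell}(Y)/\sigma_1(Y)\leq Z_\ell(E,F)$ up to the correction factor coming from non-normality.

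That correction factor is where Lemma~\ref{lem:crouzeix} enters and supplies the squared constant $\mathcal C^2$. The mechanism is that the Zolotarev argument realizes the rank truncation via a rational function $r$ of the coefficients: one uses $r(A_{22})$ and $r(-B_{11})^{-1}$ (or an analogous sandwich) to extract the dominant part of $Y$, and for normal matrices $\|r(A_{22})\|_2=\max_{z\in\operatorname{spec}(A_{22})}|r(z)|$. For non-normal matrices this spectral identity fails, and Crouzeix's inequality replaces it by $\|r(A_{22})\|_2\leq\mathcal C\max_{z\in\mathcal W(A_{22})}|r(z)|\leq\mathcal C\max_{z\in E}|r(z)|$, with an identical bound for the factor associated with $B_{11}$. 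Since the estimate involves one such factor for $A_{22}$ and one for $B_{11}$, the two constants multiply, producing $\mathcal C^2$ in front of $Z_\ell(E,F)=\inf_r \max_E|r|/\min_F|r|$. Taking the infimum over $r\in\mathcal R_{\ell,\ell}$ then yields the stated bound.

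The main obstacle, and the step deserving the most care, is making the transition from Theorem~\ref{thm:zol1} (stated for normal $A,B$) rigorous for the non-normal blocks. The cleanest way is to revisit the proof of Theorem~\ref{thm:zol1}, isolate the two points where one estimates $\|r(A_{22})\|_2$ and $\|r(-B_{11})^{-1}\|_2$ by suprema over the spectra, and observe that each such estimate is exactly the hypothesis of Lemma~\ref{lem:crouzeix} with $f=r$ holomorphic on the relevant field of values. I would therefore either cite a non-normal version of the Zolotarev bound directly, or restate its proof with the two spectral bounds replaced by the Crouzeix bound; everything else in the argument, including the rank bookkeeping and the reduction to the single block, is unchanged from Theorem~\ref{thm:quasisep-sol-normal}.
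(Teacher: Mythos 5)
Your proposal is correct and follows exactly the route the paper intends: the paper leaves this proof implicit, saying only that Lemma~\ref{lem:proj} and Lemma~\ref{lem:crouzeix} can be exploited together with the block reduction of Theorem~\ref{thm:quasisep-sol-normal}, which is precisely what you do. Your identification of the two Crouzeix applications---bounding $\norm{r(A_{22})}_2$ and $\norm{r(-B_{11})^{-1}}_2$ inside the Zolotarev argument of Theorem~\ref{thm:zol1}---as the source of the factor $\mathcal C^2$ is the right mechanism, and the rank bookkeeping carries over unchanged as you say.
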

Other similar extensions of this result can be obtained using the theory of $K$-spectral sets \cite{badea2013spectral}. 
\subsection{Quasiseparable approximability}
In the previous section we showed that, when the coefficients of the
Sylvester equation are quasiseparable, the off-diagonal blocks of
the solution $X$ have quickly decaying singular values. We want to
show that this property implies the existence of a quasiseparable approximant.

In order to do that, we first introduce the definition of
$\epsilon$-quasiseparable matrix. 

\begin{definition}
	We say that $A$ has \emph{$\epsilon$-quasiseparable rank $k$} if, for
	every off-diagonal block $Y$, $\sigma_{k+1}(Y) \leq \epsilon$. If the
	property holds for the lower (resp. upper) offdiagonal blocks,
	we say that $A$ has lower (resp. upper) $\epsilon$-quasiseparable rank $k$. 
\end{definition}

\begin{remark} \label{rem:epsilon-subblock}
	Notice that, if a matrix $A$ has $\epsilon$-quasiseparable rank $k$,
	then the same property is true for any of its principal submatrix $A'$.
	In fact, any off-diagonal block $Y$ of $A'$ is also an off-diagonal
	block of $A$, and therefore $\sigma_{k+1}(Y) \leq \epsilon$. 
\end{remark}

The next step is showing that an $\epsilon$-quasiseparable matrix admits a quasiseparable approximant. 
First, we need the following technical Lemma where $\oplus$ denotes the direct sum. 

\begin{lemma} \label{lem:unitary-quasisep}
	Let $A$ be a matrix with $\epsilon$-quasiseparable rank $k$,
	$Q$ any $(k+1) \times (k+1)$
	unitary matrix. Then, $(I_{n-k-1} \oplus Q) A$ also has
	$\epsilon$-quasiseparable rank $k$. 
\end{lemma}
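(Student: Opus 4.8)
The plan is to verify the defining inequality $\sigma_{k+1}(Y')\le\epsilon$ on every off-diagonal block $Y'$ of $\tilde A:=(I_{n-k-1}\oplus Q)A$ directly, exploiting that left multiplication by $M:=I_{n-k-1}\oplus Q$ leaves the first $n-k-1$ rows untouched and acts unitarily on the last $k+1$ ones. First I would reduce to the maximal off-diagonal blocks $\tilde A(i+1:n,1:i)$ and $\tilde A(1:i,i+1:n)$, $1\le i\le n-1$: every off-diagonal block is a submatrix of one of these, and deleting rows or columns cannot increase the singular values, so controlling $\sigma_{k+1}$ on the maximal blocks is enough.

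The heart of the argument is a case distinction on the position of the cut $i$ relative to the modified rows $n-k:n$. For a lower block $\tilde Y=\tilde A(i+1:n,1:i)$ with $i\le n-k-1$, the whole index set $n-k:n$ sits at the bottom of the row range $i+1:n$, so $\tilde Y=(I_{n-k-1-i}\oplus Q)\,Y$ where $Y=A(i+1:n,1:i)$ and the factor $(I_{n-k-1-i}\oplus Q)$ is a square unitary matrix (with a possibly empty identity block when $i=n-k-1$); since unitary left multiplication preserves singular values, $\sigma_{k+1}(\tilde Y)=\sigma_{k+1}(Y)\le\epsilon$. For the upper block $\tilde Y=\tilde A(1:i,i+1:n)$ with $i\le n-k-1$ the rows $1:i$ are not touched at all, so $\tilde Y=Y$ and the bound is inherited verbatim.

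The complementary regime $i\ge n-k$ is where I would expect the only real subtlety: there the unitary mixing of rows $n-k:n$ can blend entries of the diagonal block $A(i+1:n,i+1:n)$ into an off-diagonal block, so $\tilde Y$ is no longer a clean unitary image of $Y$. What makes this harmless is purely dimensional: when $i\ge n-k$ the lower block $\tilde A(i+1:n,1:i)$ has only $n-i\le k$ rows and the upper block $\tilde A(1:i,i+1:n)$ has only $n-i\le k$ columns, so in either case $\operatorname{rank}(\tilde Y)\le k$ and hence $\sigma_{k+1}(\tilde Y)=0$. Combining the two regimes over all $i$ yields $\sigma_{k+1}\le\epsilon$ on every off-diagonal block, i.e.\ $\tilde A$ has $\epsilon$-quasiseparable rank $k$. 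The main obstacle is therefore not a hard estimate but the bookkeeping of the case split; once one observes that the troublesome mixing only occurs precisely when the block is already thin, the proof closes immediately.
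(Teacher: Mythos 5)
Your proof is correct and follows essentially the same route as the paper's: reduce to the maximal off-diagonal blocks, then split into the case where the block absorbs all $k+1$ modified rows (so it equals $(I \oplus Q)$ times the corresponding block of $A$ and its singular values are preserved) and the case where the block is thin enough ($\leq k$ rows or columns) that $\sigma_{k+1}$ vanishes trivially. The only difference is cosmetic: you spell out the upper-block case explicitly, which the paper dispatches with ``follows the same lines.''
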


\begin{proof}
	We prove the result for the lower off-diagonal blocks; the proof for
	the upper part follows the same lines. Observe that we can verify
	the property for every maximal subdiagonal block $Y$, that is $Y$
	involves the first sub-diagonal and the lower-left corner. If $Y$ is contained
	in the last $k$ rows, its rank is at most $k$.  Otherwise, if $Y$
	includes elements from the last $j > k$ rows, we can write
	$Y = (I_{j-k-1} \oplus Q) \tilde Y$, where $\tilde Y$ is the
	corresponding subblock of $A$ (these two situations
	are depicted in Figure~\ref{fig:offdiagonal-after-transform}). Therefore,
	$\sigma_{k+1}(Y) = \sigma_{k+1}(\tilde Y) \leq \epsilon$.
\end{proof}

\begin{figure}
	\centering
	\begin{tikzpicture}
	% Q 
	\draw (0,0) rectangle (2,2);
	\draw (0,2) -- (1.6,0.4);
	\draw (1.6,0) rectangle (2,0.4);
	\node at (1.8,0.2) { $Q$ };
	
	% A
	\draw (2.2,0) rectangle (4.2,2);
	\draw (2.2,2) -- (3.1,1.1); \draw (3.4,0.8) -- (4.2,0);    
	\node at (3.2,1) { $A$ };
	
	% Off-diagonal blocks
	\draw[dashed,gray] (2.2,1.3) -- (2.9,1.3) -- (2.9,0);
	\draw[dashed,gray] (2.2,0.3) -- (3.9,0.3) -- (3.9,0);    
	
	\node at (4.6,1) { $=$ };
	
	% QA
	\draw (5,0) rectangle (7,2);
	\draw (5,2) -- (7,0);
	
	% Off-diagonal blocks
	\draw[dashed,gray] (5,1.3) -- (5.7,1.3) -- (5.7,0);
	\draw[dashed,gray] (5,0.3) -- (6.7,0.3) -- (6.7,0);
	\end{tikzpicture}
	\caption{Off-diagonal blocks in the matrix $(I_{n-k-1} \oplus Q) A$. From the
		picture one sees that the $Q$ acts on the tall block without
		changing its singular values, and that the small one has small
		rank thanks to the small number of rows.}
	\label{fig:offdiagonal-after-transform}
\end{figure}
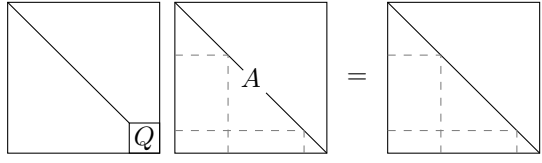

\begin{theorem} \label{thm:quasisep-approximant}
	Let $A$ be of $\epsilon$-quasiseparable rank $k$, for $\epsilon > 0$.
	Then, there exists a matrix $\delta A$ of norm
	bounded by $\norm{\delta A}_2 \leq 2 \sqrt{n} \cdot \epsilon$
	so that $A + \delta A$ is $k$-quasiseparable. 
\end{theorem}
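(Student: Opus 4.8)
The plan is to treat the strictly lower and strictly upper triangular parts separately and then recombine. Since $A^T$ has the same $\epsilon$-quasiseparable ranks as $A$ with the roles of the two triangles swapped, it suffices to construct a perturbation $\delta A_-$, supported on the strictly lower triangular part, with $\norm{\delta A_-}_2 \le \sqrt{n}\,\epsilon$, such that the lower off-diagonal blocks of $A + \delta A_-$ all have rank at most $k$; the analogous strictly-upper $\delta A_+$ is then obtained by applying the same result to $A^T$ and transposing. Because $\delta A_-$ and $\delta A_+$ act on disjoint (strictly lower, resp. strictly upper) entries, the lower quasiseparable rank of $A + \delta A_- + \delta A_+$ is determined by $A + \delta A_-$ and its upper quasiseparable rank by $A + \delta A_+$, so the sum is genuinely $k$-quasiseparable, and the triangle inequality yields $\norm{\delta A}_2 \le \norm{\delta A_-}_2 + \norm{\delta A_+}_2 \le 2\sqrt{n}\,\epsilon$.

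To build $\delta A_-$ I would argue by induction on $n$, peeling one row at a time from the bottom. For $n \le k+1$ every lower off-diagonal block has at most $k$ rows or columns, hence rank $\le k$, and nothing is done. For $n > k+1$, consider the maximal lower block $Y = A(n-k:n,\,1:n-k-1)$, which has $k+1$ rows and satisfies $\sigma_{k+1}(Y) \le \epsilon$ by assumption. Choosing a left singular vector $u$ associated with $\sigma_{k+1}(Y)$ and completing it to a $(k+1)\times(k+1)$ unitary $Q$ whose last row is $u^H$, Lemma~\ref{lem:unitary-quasisep} guarantees that $(I_{n-k-1}\oplus Q)A$ still has $\epsilon$-quasiseparable rank $k$, while its last row now has Euclidean norm at most $\epsilon$ on the columns $1:n-k-1$. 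Zeroing these entries costs a perturbation of Frobenius norm $\le\epsilon$ supported on a single row; afterwards the last row no longer contributes to any block $A(i:n,1:i-1)$ with $i \le n-k$, so the lower quasiseparability of the whole matrix reduces to that of the leading $(n-1)\times(n-1)$ principal submatrix, which still has $\epsilon$-quasiseparable rank $k$ by Remark~\ref{rem:epsilon-subblock}. Applying the induction hypothesis there and pulling everything back through $(I_{n-k-1}\oplus Q)^H$ — which preserves exact quasiseparability, i.e. Lemma~\ref{lem:unitary-quasisep} with $\epsilon=0$ — produces a perturbation $E$ with $A+E$ lower $k$-quasiseparable.

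The norm bound is bookkeeping: each of the (at most $n$) peeling steps zeroes entries on a distinct row of Euclidean norm $\le\epsilon$, and since the unitary pull-backs preserve the Frobenius norm, the accumulated $E$ satisfies $\norm{E}_F^2 \le n\,\epsilon^2$, hence $\norm{E}_2 \le \norm{E}_F \le \sqrt{n}\,\epsilon$. The delicate point, which I expect to be the main obstacle, is that $E$ is not strictly lower triangular: the unitary pull-backs mix the last $k+1$ rows and spill into the diagonal and upper corner, which would corrupt the upper triangle and destroy the clean additive recombination of the first paragraph. I would resolve this by simply discarding the offending entries, i.e. by replacing $E$ with its strictly lower triangular part $\delta A_-$. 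Since the lower quasiseparable rank of $A+E$ depends only on the strictly lower triangular entries, $A + \delta A_-$ has exactly the same lower off-diagonal blocks as $A+E$ and is therefore still lower $k$-quasiseparable, while restricting to a sub-triangle can only decrease the Frobenius norm, so $\norm{\delta A_-}_2 \le \sqrt{n}\,\epsilon$ is preserved. This makes $\delta A_-$ strictly lower triangular, which is precisely what the disjoint-support recombination requires.
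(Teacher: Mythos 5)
Your proposal is correct and follows essentially the same route as the paper's own proof: the same induction on $n$ that rotates the last $k+1$ rows by a unitary built from the singular vectors of the maximal lower block, zeroes the resulting $\epsilon$-small row, recurses on the leading principal submatrix (via Lemma~\ref{lem:unitary-quasisep} and Remark~\ref{rem:epsilon-subblock}, with the $\epsilon=0$ pull-back at the end), and recombines strictly lower and strictly upper perturbations obtained from $A$ and $A^T$. The only real difference is that you do the bookkeeping in the Frobenius norm, with the Pythagorean recursion coming from the disjoint supports at each level before the unitary pull-back, whereas the paper uses a spectral-norm block estimate; this yields the same $\sqrt{n}\,\epsilon$ bound and, incidentally, makes the final truncation step airtight, since $\tril$ is contractive in the Frobenius norm but not, in general, in the $2$-norm in which the paper states its corresponding remark.
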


\begin{proof}
	We first show that there exists a 
	perturbation $\delta A_\ell$ of norm bounded by $\sqrt{n} \cdot \epsilon$
	that makes every lower off-diagonal block of $A$ of rank $k$.
	
	We prove the result by induction on the dimension of $A$.
	If $n \leq 2k+1$ there is nothing to prove, since $A$ has
	all the off-diagonal blocks of rank at most $k$. If $n \geq 2k+2$,
	consider the following block partitioning of $A$:
	\[
	A =
	\begin{bmatrix}
	A_{11}  & A_{12} \\
	A_{21} & A_{22} \\
	\end{bmatrix}, \qquad
	A_{11} \in \mathbb{C}^{(n-k-1) \times (n-k-1)}, \quad 
	A_{22} \in \mathbb{C}^{(k+1) \times (k+1)}. 
	\]
	Since $\sigma_{k+1}(A_{21}) \leq \epsilon$, multiplying on the left by
	a unitary matrix $I_{n-k-1} \oplus Q^T$, where $Q$ contains the first
	$k$ left singular vectors of $A_{21}$, yields
	\[
	\widetilde A := ( I_{n-k-1} \oplus Q^T ) A = 
	\begin{bmatrix}
	\widetilde A_{1} & v \\
	w^T & d \\
	\end{bmatrix}, \qquad
	w =
	\begin{bmatrix}
	w_1 \\
	w_2 
	\end{bmatrix}, \quad 
	\norm{w_1}_2 \leq \epsilon, \quad 
	w_2 \in \mathbb{C}^{k}, \quad
	d \in \mathbb{C}. 
	\]
	Observe that, in view of Lemma~\ref{lem:unitary-quasisep},
	$\widetilde A$ still has $\epsilon$-quasiseparable rank $k$ and,
	according to Remark~\ref{rem:epsilon-subblock}, the same holds for
	$\widetilde A_1$. Therefore, thanks to the induction step, there
	exists $\delta \widetilde A_{\ell,1}$ such that
	$\widetilde A_1 + \delta \widetilde A_{\ell,1}$ has lower
	quasiseparable rank $k$ and
	$\norm{\delta \widetilde A_{\ell,1}}_2 \leq \sqrt{n-1} \cdot \epsilon$.
	
	Define $\delta  A_\ell$ and $\delta \widetilde A_\ell$ as follows:
	\[
	\delta  A_\ell :=
	( I_{n-k-1} \oplus Q ) \underbrace{\begin{bmatrix}
		\delta \widetilde A_{\ell,1} & 0 \\
		- z^T & 0 \\
		\end{bmatrix}}_{\delta \widetilde A_\ell}, \qquad 
	z =
	\begin{bmatrix}
	w_1 \\
	0 \\
	\end{bmatrix}. 
	\]
	Notice that  
	$\norm{\delta \widetilde A_\ell}_2 \leq \sqrt{ \norm{\delta \widetilde A_{\ell,1}}_2^2 + \norm{z}_2^2
	} \leq \sqrt{n} \epsilon$. We claim that $A + \delta A_\ell$ is
	lower $k$-quasiseparable. With a direct computation we get
	\[
	A + \delta A_\ell = ( I_{n-k-1} \oplus Q )
	\underbrace{\begin{bmatrix}
		\tilde A_1 + \delta \tilde A_{\ell,1} & v \\
		w^T - z^T & d \\
		\end{bmatrix}}_{\widehat A}. 
	\]
	The matrix $\widehat A$ is lower $k$-quasiseparable. In fact, every
	subdiagonal block of $\widehat A$ is equal to a subblock of
	$\widetilde A_1 + \delta \widetilde A_{\ell,1}$, possibly with an additional last row. If
	the subblock does not involve the last $k + 1$ columns, the
	additional row is zero, and so the rank does not increase.
	Otherwise, the smallest dimension of the block is less or equal than
	$k$, so its rank is at most $k$. Applying
	Lemma~\ref{lem:unitary-quasisep} once more with $\epsilon=0$ we get
	that $A+\delta A_\ell$ is lower $k$-quasiseparable.
	Notice that, it is not restrictive to assume $\delta A_{\ell}$ lower triangular. In fact, if this is not the case one can consider $\tril(\delta A_\ell)$ that still has the same property and has a smaller norm. 
	
	Repeating the process with $A^T$, we obtain an upper triangular matrix $\delta A_u$,
	of norm bounded by $\sqrt{n} \cdot \epsilon$, such that
	$A + \delta A_u$ is upper $k$-quasiseparable. Therefore, we have
	that $A + \delta A$ with
	$\delta A := \delta A_\ell + \delta A_u$ is $k$-quasiseparable, and 
	$\norm{\delta A}_2 \leq \norm{\delta A_\ell}_2 +
	\norm{\delta A_u}_2 \leq 2 \sqrt{n} \cdot \epsilon$. 
\end{proof}
\begin{remark}
	Notice that,  for $n\leq 2k+1$, the claim of  Theorem~\ref{thm:quasisep-approximant} holds by choosing $\delta A=0$.
	This means that the constant $2\sqrt{n}$ can be replaced with $2\sqrt{\max\{n-2k-1,0\}}$.
\end{remark}
The above result shows that a matrix with $\epsilon$-quasiseparable
rank of $k$ can be well-approximated by a matrix with exact
quasiseparable rank $k$.
\subsection{Preservation of the quasiseparable and banded structures}
The results of the previous section guarantee the presence of a numerical quasiseparable structure in the solution $X$ to
\eqref{eq:lyap} when the spectra of $A$ and $-B$ are
well separated in the sense of the Zolotarev problem. 

The preservation of a banded pattern in the solution has already been treated in \cite{haber,Palitta2017} in case of Lyapunov
equations with banded data and well-conditioned coefficient matrix. Moreover, in \cite{Palitta2017}, it has been shown that 
if $A$ is ill-conditioned, the solution $X$ can be written as the sum of a banded matrix and a low-rank one, so that $X$ 
is quasiseparable.
It is worth noticing that the results concerning the preservation of the banded and the banded plus low-rank structures
do not require the separation property 
on the spectra of the coefficient matrices. This means that there are cases ---not covered by the results of
Section~\ref{sec:zolotarev}--- where the quasiseparability is still preserved. 
 
In order to validate this consideration we set up some experiments
concerning the solution to \eqref{eq:lyap} varying the structure of
the coefficients and of the right-hand side.  In particular, the
features of the solution we are interested in are: the distribution of
the singular values $\sigma_\ell$ of the off-diagonal block
$X(\frac n2+1:n,1:\frac n2)$\footnote{Notice that, in order to obtain a
  good hierarchical representation of the given matrices, the same structure
  needs to be present also in the upper off-diagonal block, and in the
  smaller off-diagonal blocks obtained in the recursion. Here we check
  just the larger off-diagonal block for simplicity; in the generic case, one may expect the quasiseparable rank 
  to be given by the rank of this block. 
  }
  and the decay in the magnitude of the
elements getting far from the main diagonal. The latter quantity is
represented with the distribution of the maximum magnitude along the
subdiagonal $\ell$ as $\ell$ varies from $1$ to $n$.  In all the
performed tests we set $n=300$ and the solution $X$ is computed by the
Bartels-Stewart algorithm \cite{Bartels1972}.

\begin{figure}\label{fig:test1}
    \centering
    \begin{tikzpicture}
      \begin{semilogyaxis}[ylabel = $\sigma_\ell$ , xlabel = $\ell$, xmax=50, width = .95\linewidth,
        height=.35\textheight, legend pos=south west]     
        \addplot table[x index = 0, y index = 1] {test2-3.dat};
        \addplot table[x index = 0, y index = 7] {test2-3.dat};
         \addplot table[x index = 0, y index = 2] {test2-3.dat};
                \addplot table[x index = 0, y index = 8] {test2-3.dat};
        \legend{$\sigma_\ell$ (banded RHS), decay in the band (banded RHS), $\sigma_\ell$ (dense quasisep. RHS), 
        decay in the band (dense quasisep. RHS)};
      \end{semilogyaxis}
    \end{tikzpicture}
    \caption{We compute $X_i\in\mathbb{R}^{n\times n}$, $n=300$, as the solution of $AX_i+X_iA= C_i$ for $i=1,2$ respectively. 
    $A$ is symmetric and tridiagonal with eigenvalues in $[0.2,+\infty)$ 
    (positive definite and well-conditioned). $C_1$ is tridiagonal symmetric while $C_2$ is 
    a dense random symmetric quasiseparable matrix of rank $1$.}
    \end{figure}
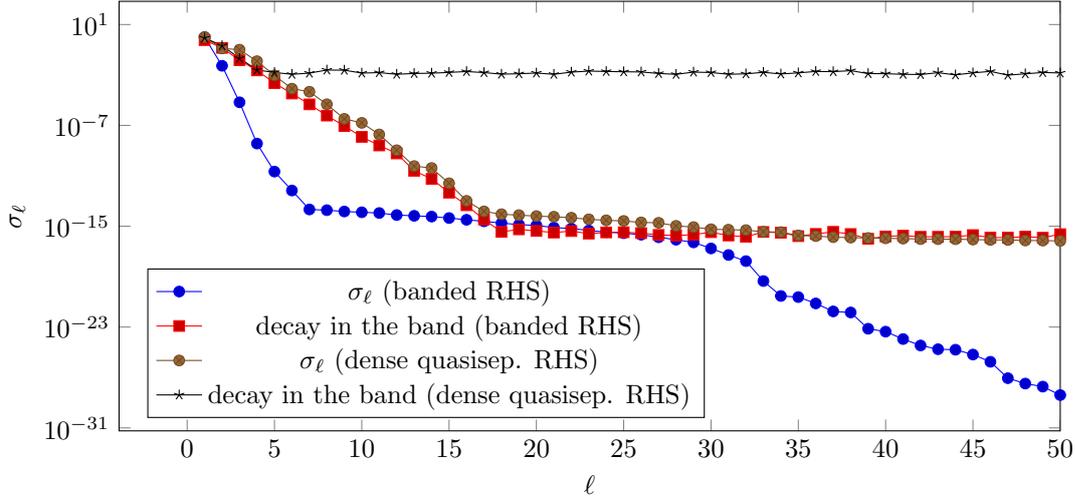
    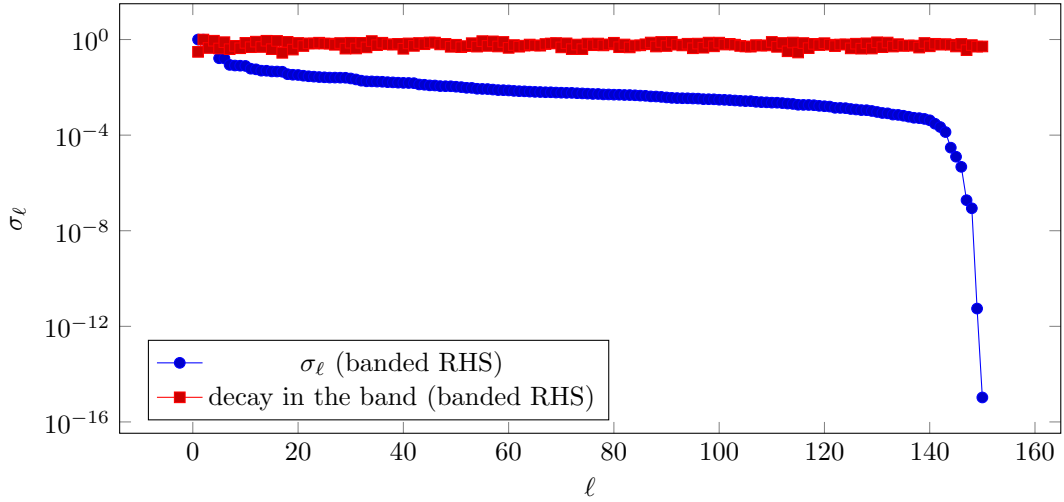
\begin{figure}\label{fig:test2}
        \centering
        \begin{tikzpicture}
          \begin{semilogyaxis}[ylabel = $\sigma_\ell$ , xlabel = $\ell$, width = .95\linewidth,
            height=.35\textheight, legend pos=south west]     
            \addplot table[x index = 0, y index = 3] {test2-3.dat};
            \addplot table[x index = 0, y index = 9] {test2-3.dat};
            \legend{$\sigma_\ell$ (banded RHS), decay in the band (banded RHS)};
          \end{semilogyaxis}
        \end{tikzpicture}
        \caption{We compute the solution $X$ of $AX+XA = C$ and we analyze the off-diagonal block $X(\frac n2+1:n,1:\frac n2)$. $A=\trid(-1,2,1)-1.99\cdot I$
        (indefinite and ill-conditioned) while $C$ is a random diagonal matrix.}
        \end{figure}
        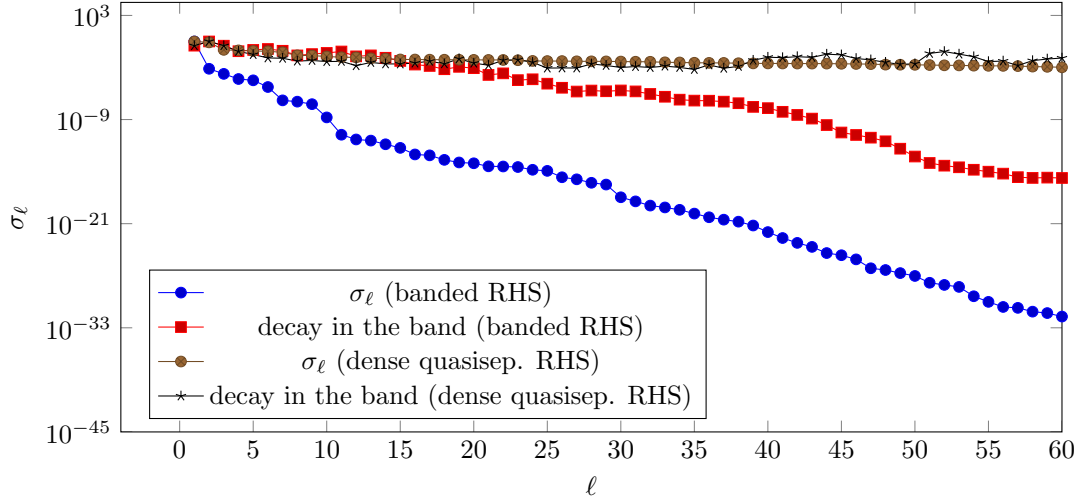
\begin{figure}\label{fig:test3}
            \centering
            \begin{tikzpicture}
              \begin{semilogyaxis}[ylabel = $\sigma_\ell$ , xlabel = $\ell$,xmax=60, xmin=-4, ymin=1e-45,width = .95\linewidth,
                height=.35\textheight, legend pos=south west]     
                \addplot table[x index = 0, y index = 4] {test2-3.dat};
                \addplot table[x index = 0, y index = 10] {test2-3.dat};
                 \addplot table[x index = 0, y index = 5] {test2-3.dat};
                        \addplot table[x index = 0, y index = 11] {test2-3.dat};
                \legend{$\sigma_\ell$ (banded RHS), decay in the band (banded RHS), $\sigma_\ell$ (dense quasisep. RHS),
                decay in the band (dense quasisep. RHS)};
              \end{semilogyaxis}
            \end{tikzpicture}
            \caption{We compute $X_i\in\mathbb{R}^{n\times n}$, $n=300$, as the solution of $AX_i-X_iB= C_i$ for $i=1,2$,
            respectively. 
             $A$ and $B$  are symmetric and tridiagonal with eigenvalues in $[0.2,14]$ and $[0.5,14]$ 
            (well conditioned but without separation of the spectra). $C_1$ is tridiagonal symmetric while
            $C_2$ is a dense random symmetric quasiseparable matrix of rank $1$.}
            \end{figure}
            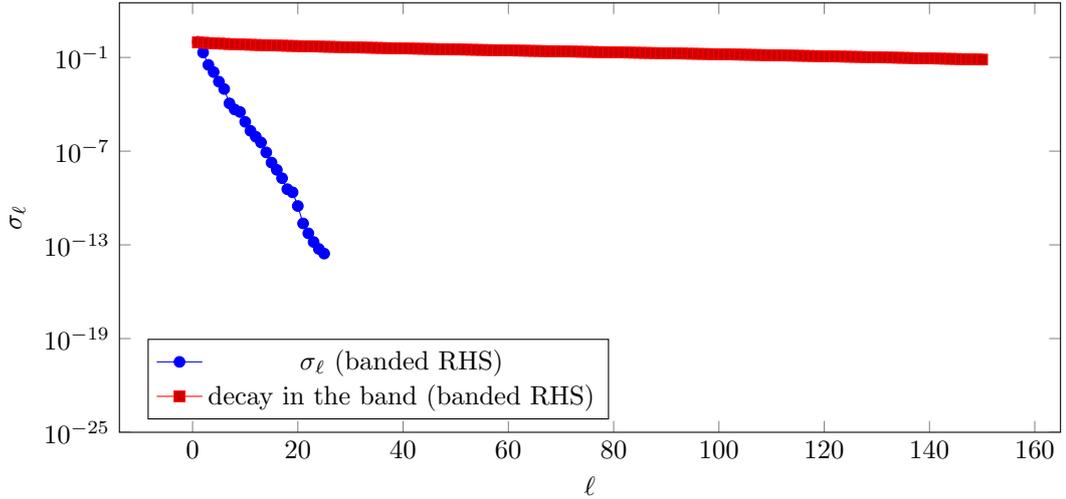
\begin{figure}\label{fig:test4}
                    \centering
                    \begin{tikzpicture}
                      \begin{semilogyaxis}[ylabel = $\sigma_\ell$ ,ymin=1e-25, xlabel = $\ell$, width = .95\linewidth,
                        height=.35\textheight, legend pos=south west]     
                        \addplot [skip coords between index={25}{150}, mark = *, color = blue]  table[x index = 0, y index = 6]  {test2-3.dat};
                        \addplot table[x index = 0, y index = 12] {test2-3.dat};
                        \legend{$\sigma_\ell$ (banded RHS), decay in the band (banded RHS)};
                      \end{semilogyaxis}
                    \end{tikzpicture}
                    \caption{We compute the solution $X\in\mathbb{R}^{n\times n}$, $n=300$, 
                    of $AX+XA = C$. $A=\trid(-1,2,1)$
                    (positive definite and ill-conditioned) while $C$ is a random diagonal matrix.}
                    \end{figure}
                   
                   \begin{itemize}
                    \item[\textbf{Test 1}] We compute $X_i$ as the solution of $AX_i+X_iA= C_i$ for $i=1,2$. 
                    The matrix $A$ is chosen symmetric tridiagonal with eigenvalues in $[0.2,+\infty)$, 
                    in particular $A$ is positive definite and well-conditioned. The right-hand side $C_1$ is taken
                    tridiagonal symmetric with random entries while $C_2$ is a random dense symmetric matrix with 
                    quasiseparable  rank $1$. 
                     In the first case, results from
                      \cite{haber, Palitta2017} ensure that --
                      numerically -- the banded structure is mantained
                      in the solution and this is shown in Figure
                      \ref{fig:test1}. Notice that the decay in the
                      off-diagonal singular values is much stronger than the
                      decay in the bandwidth so that, in this example,
                      it is more advantageous to look at the solution
                      as a quasiseparable matrix instead of a banded
                      one. Theorem \ref{thm:quasisep-sol-normal}
                      guarantees the solution to be quasiseparable
                      also in the second case whereas the banded
                      structure is completely lost.
                \item[\textbf{Test 2}] We compute the solution $X$ of
                  $AX+XA = C$. We consider
                  $A=\trid(-1,2,1)-1.99\cdot I$, so that it is
                  indefinite and ill-conditioned, and we set $C$ equal
                  to a random diagonal matrix. As highlighted in
                  Figure~\ref{fig:test2}, both the quasiseparable and
                  the band structure are not present in the solution $X$. 
                \item[\textbf{Test 3}] We compute $X_i$ as the
                  solution of $AX_i+X_iB= C_i$ for $i=1,2$.  The
                  matrix $A$ and $-B$ are chosen symmetric and
                  tridiagonal with eigenvalues in $[0.2,14]$ and
                  $[0.5,14]$, so both well conditioned but with
                  interlaced spectra. The right-hand side $C_1$ is
                  chosen tridiagonal symmetric while $C_2$ is set
                  equal to a random dense symmetric matrix with
                  quasiseparable rank $1$. The results in
                  Figure~\ref{fig:test3} suggest that both the
                  structures are preserved in the first case and lost
                  in the second case. Once again, in the case of
                  preservation, the decay in the off-diagonal singular
                  values is stronger than the decay in the bandwidth.
                   Notice that, when present, the
                    quasiseparability of the solution cannot be
                    predicted by means of Theorem
                    \ref{thm:quasisep-sol-normal}, but results from
                    \cite{haber,Palitta2017} can be employed to
                    estimate the banded structure of the
                    solution. This test shows how the banded structure
                    is a very particular instance of the more general
                    quasiseparable one.
                     \item[\textbf{Test 4}] We compute the solution $X$ of $AX+XA = C$. We chose $A=\trid(-1,2,1)$, 
                     so it is positive definite and ill-conditioned, and we set $C$ equal to a random diagonal matrix. 
                     Figure~\ref{fig:test4} clearly shows that quasiseparability is preserved while the banded structure
                     is not present in the solution $X$.
                      In this case, the quasiseparability of the solution can be shown by Theorem 
                     \ref{thm:quasisep-sol-normal}. Equivalently, one can exploits arguments in \cite{Palitta2017} where it 
                     has been shown that the solution can be represented as the sum of a banded matrix and a low-rank one so 
                     that $X$ is quasiseparable.
                    \end{itemize} 
                    To summarize, the situations where we know that the quasiseparable structure is present in the solution 
                    of \eqref{eq:lyap} are:
\begin{enumerate}[label=(\roman*)]
\item $A,B$ and $C$ quasiseparable and spectra of $A$ and $-B$ well
  separated\footnote{We consider the spectra to be well separated if
    Theorem~\ref{thm:quasisep-sol-normal} can be used to prove the
    quasiseparability. As we have seen, this also includes cases where
    the spectra are close, such as when they are separated by a
    line.};
\item $A,B$ and $C$ banded and well-conditioned.
\end{enumerate}  
On the other hand, for using
the computational approach of Section~\ref{sec:procedure} we need the spectra of $A$ and $-B$ to be separated by a line.

\section{HODLR-matrices}\label{sec:hodlr}
An efficient way to store and operate 
on matrices with an off-diagonal data-sparse structure is to use hierarchical formats. 
There is a vast literature on
this topic. See, e.g.,
\cite{hackbusch1999sparse,borm,hackbusch2000h} and the references
therein.
In this work, we rely on a particular subclass of the set of hierarchical representations sometimes called hierarchically 
off-diagonal low-rank (HODLR), which can be described as follows;
 let $A\in\mathbb{C}^{n\times n}$ be a $k$-quasiseparable matrix, we consider the $2\times 2$ block partitioning
 \[
   A=
   \begin{bmatrix}
     A_{11}&A_{22}\\
     A_{21}&A_{22}
   \end{bmatrix}
   , \qquad
   A_{11}\in\mathbb C^{n_1\times n_1}, \quad
   A_{22}\in\mathbb C^{n_2\times n_2} 
 \] where $n_1:=\lfloor \frac{n}{2} \rfloor $ and
 $n_2:=\lceil \frac{n}{2} \rceil$.  Since the antidiagonal blocks
 $A_{12}$ and $A_{21}$ do not involve any element of the main diagonal
 of $A$, they have rank at most $k$, so they are represented as
 low-rank outer products.  Then, the strategy is applied recursively
 on the diagonal blocks $A_{11}$ and $A_{22}$.  The process stops when
 the diagonal blocks reach a minimal dimension $n_{\text{min}}$, at
 which they are stored as full matrices. The procedure is graphically
 described in Figure~\ref{fig:Hmatrices}.  If $n_{\text{min}}$ and $k$
 are negligible with respect to $n$ then the storage cost is
 linear-polylogarithmic with respect to the size of the matrix, 
 as briefly summarized in Table~\ref{tab:complexity}. HODLR matrices
 are equivalent to hierarchical matrices with weak admissibility in the
 classification used in \cite{Hackbusch2016}.

It is natural to compare the storage required
by the HODLR representation and the truncation of banded structures, when they
are both present in the solution. 
Consider the following test: we compute
the solution $X$ of a Lyapunov equation
with a tridiagonal well-conditioned coefficient
matrix $A$ and a diagonal right hand-side
with random entries. As discussed in the previous section, the solution has a fast decay in the magnitude of the entries as we
get far from the main diagonal. 
We compare the accuracy obtained when the 
solution $X$ is stored in the HODLR format
 with different thresholds in the low-rank truncation of the off-diagonal blocks, and when a fixed number of diagonals 
 are memorized. 
In particular, the accuracy
achieved keeping $5k$ diagonals and truncating the SVD of the
off-diagonal blocks using thresholds 
$10^{-k}$, for $k = 0, \ldots, 16$, is illustrated in Figure~\ref{fig:memoryvsaccuracy}. 
We can see that the two approaches have comparable performances for this example. 
	The experiment is repeated using $A=\trid(-1,2,-1)$ highlighting the non feasibility of the sparse format in this case
as the banded structure is not preserved in the solution.

   \begin{figure}[!ht]
 \centering
 \includegraphics[width=0.18\textwidth]{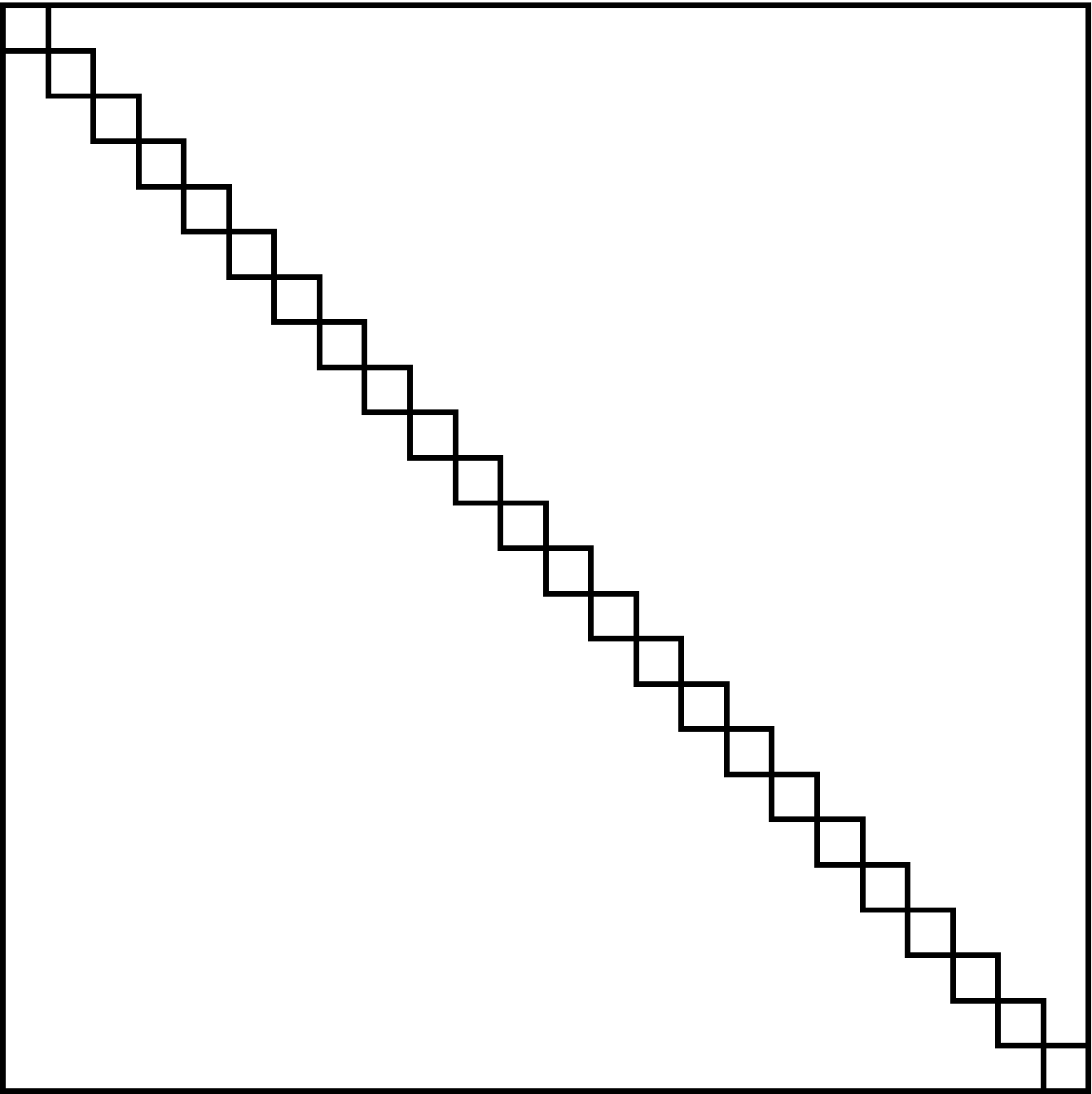}\qquad 
 \includegraphics[width=0.18\textwidth]{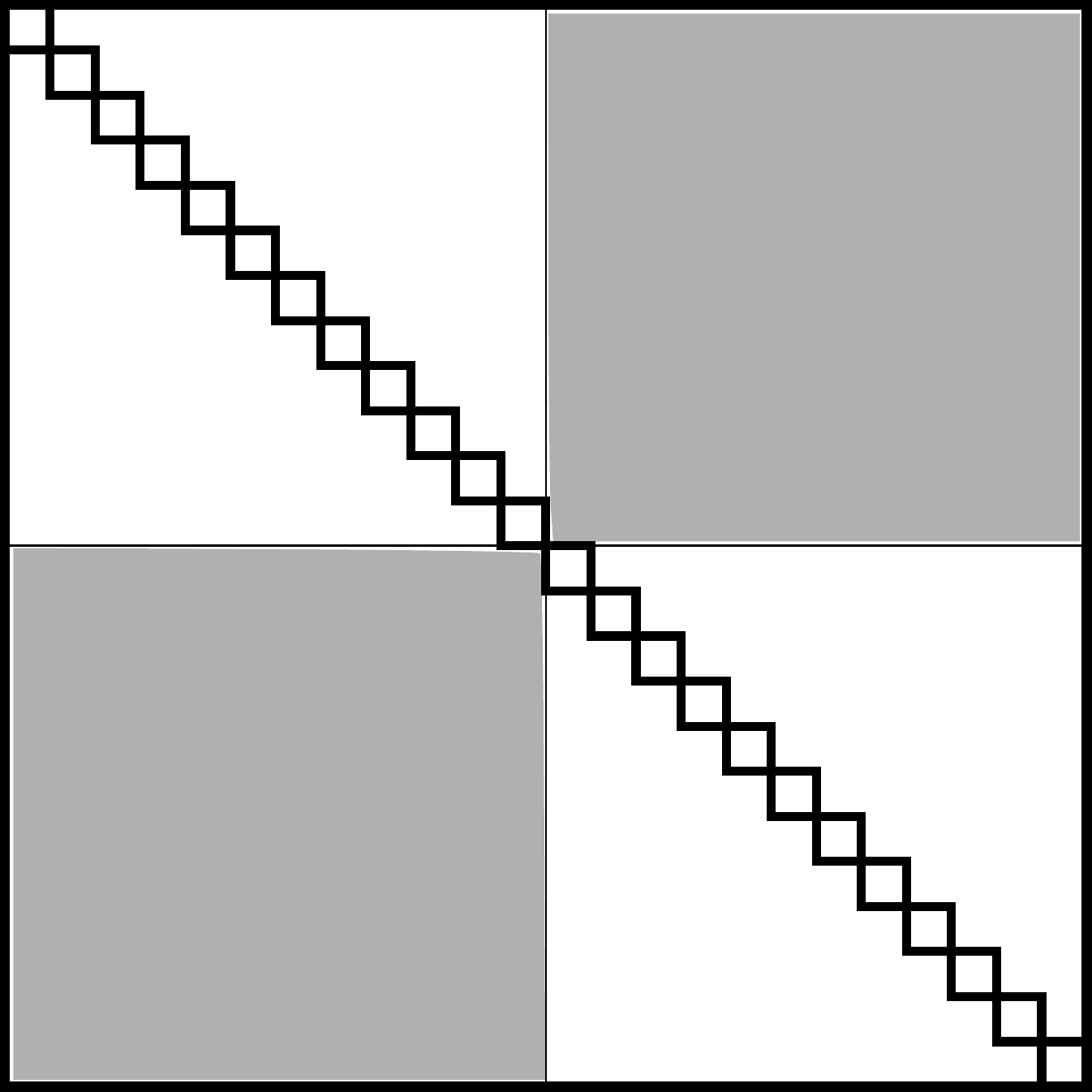}
 \qquad 
 \includegraphics[width=0.18\textwidth]{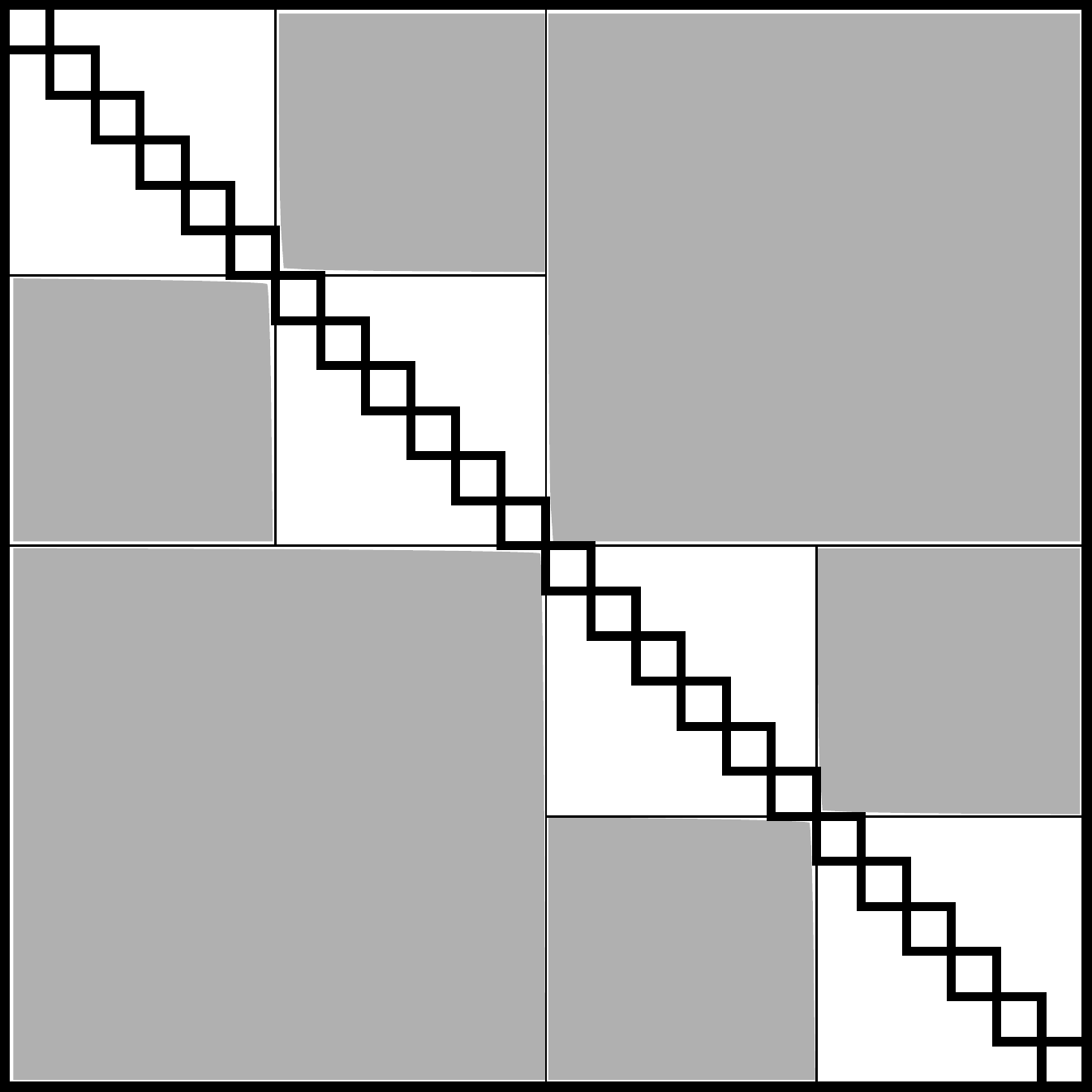}\qquad 
 \includegraphics[width=0.18\textwidth]{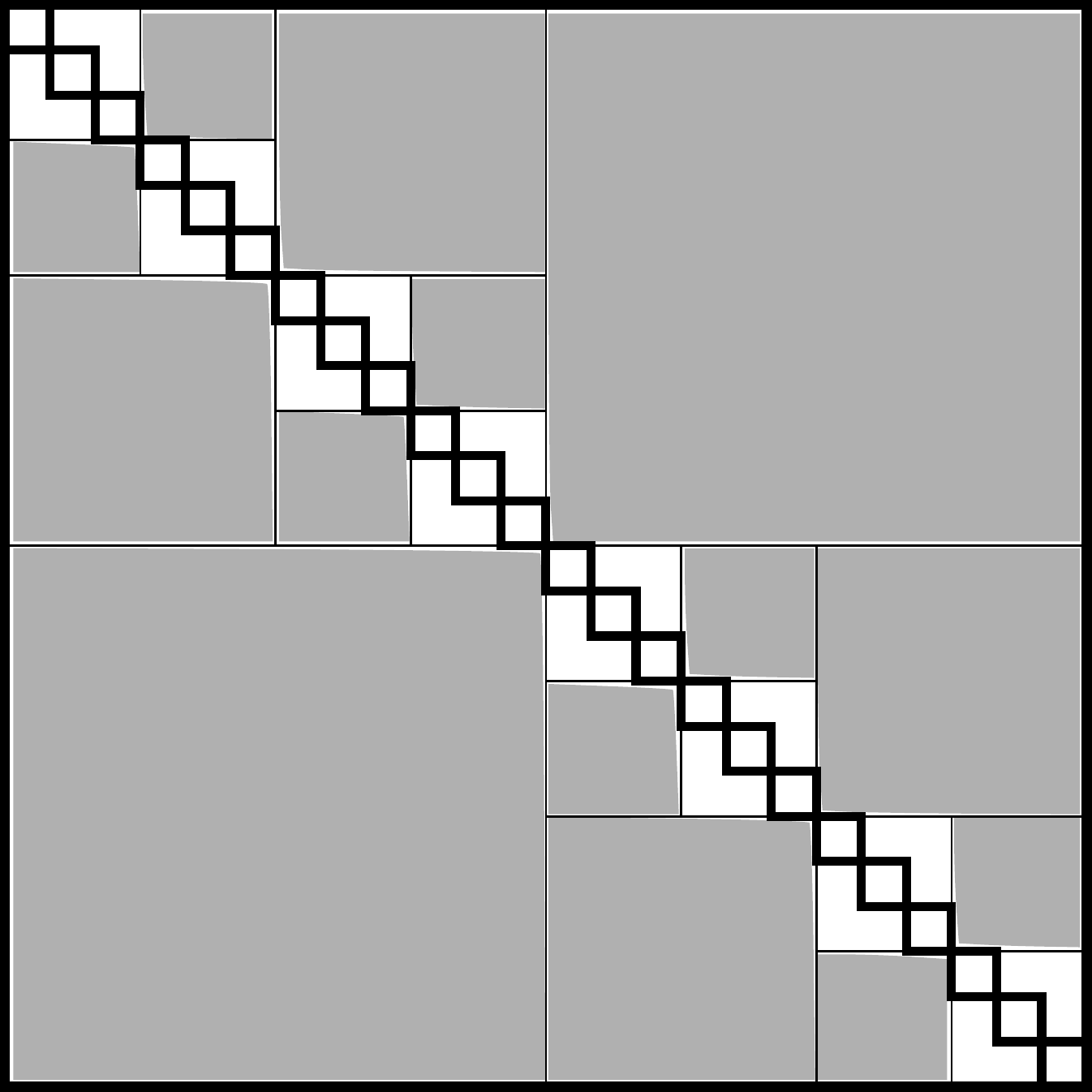}
  \caption{The behavior of the block partitioning in the HODLR-matrix representation. 
  The blocks filled with grey are low rank matrices represented in a compressed form, 
  and the diagonal blocks in the last step are stored as dense matrices.}\label{fig:Hmatrices}
 \end{figure}           

 The HODLR format has been studied intensively in the last decade and  algorithms  with  almost linear complexity for
 computing matrix operations are available, see, e.g., Chapter $3$ in \cite{Hackbusch2016}.   
 Intuitively, the convenience of using this representation in a procedure is strictly related with the growth of 
 the numerical rank of the off-diagonal blocks in the intermediate results. This can be formally justified with an 
 argument based on the Eckart-Young best approximation property, see Theorem 2.2 in \cite{Bini2017}.
 
 \begin{figure}
 \begin{center}
 \begin{tikzpicture}
 \begin{loglogaxis}[width=.45\textwidth, legend pos=south west,xlabel=Kilobytes, ylabel=$\norm{\widetilde X -X}_2/\norm{X}_2 $]
   \addplot table[x index=0, y index = 1]{mem-acc.dat};
   \addplot table[x index=2, y index = 3]{mem-acc.dat};
   \legend{Sparse, HODLR};
 \end{loglogaxis}           
 \end{tikzpicture} 
 \quad
 \begin{tikzpicture}
  \begin{loglogaxis}[width=.45\textwidth, legend pos=south west,xlabel=Kilobytes, ylabel=$\norm{\widetilde X -X}_2/\norm{X}_2 $]
    \addplot table[x index=0, y index = 1]{mem-acc-lap.dat};
    \addplot table[x index=2, y index = 3]{mem-acc-lap.dat};
    \legend{Sparse, HODLR};
  \end{loglogaxis}           
  \end{tikzpicture} 
  \end{center}  
  \caption{We show the accuracy obtained approximating the solution $X$ to a 
  Lyapunov equation keeping a certain number of diagonals and by truncating
  the HODLR representations with $n_{min}=50$. The plot reports
  the accuracy obtained with respect to the memory consumption when $A$ is banded and well conditioned (left), and for $A=\trid(-1,2,-1)$ (right). The matrices have dimension $n=2048$, the storage cost for the dense matrix $X$ is $32678$ KB.}
  \label{fig:memoryvsaccuracy}   
 \end{figure}
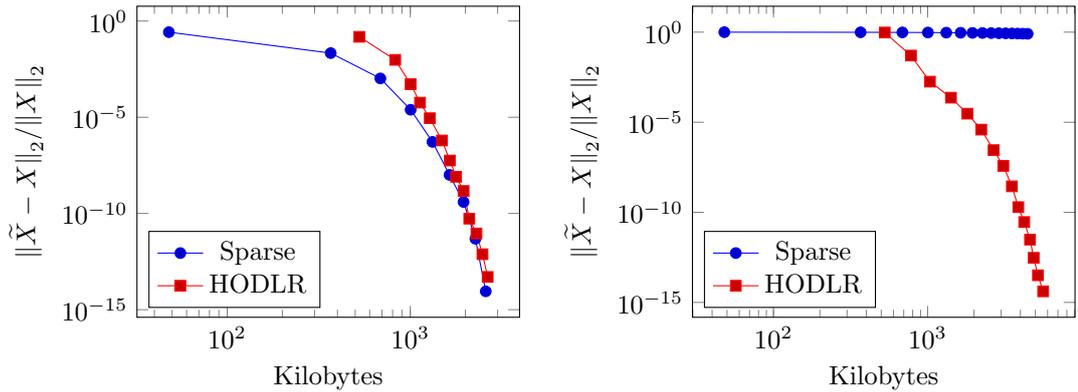

We relied on \texttt{hm-toolbox} for our experiments, which is available 
at \url{https://github.com/numpi/hm-toolbox}, and implements HODLR arithmetic.

\begin{table}
                                \begin{center}                   
                                
                                {
                 \begin{tabular}{cc}
                  \toprule
                   Operation & Computational complexity \\ \midrule
                  Matrix-vector multiplication & $O(k n\log(n))$\\
                  Matrix-matrix addition & $O(k^2 n\log(n))$\\
                  Matrix-matrix multiplication & $O(k^2 n\log^2(n))$\\
                  Matrix-inversion & $O(k^2 n\log^2(n))$\\
                  Solve linear system & $O(k^2 n\log^2(n))$\\
                   \hline
            
                 \end{tabular}
                }
                \end{center}
                \caption{Computational complexity of the HODLR-matrix arithmetic. The integer $k$ is the maximum of the quasiseparable ranks of the inputs while $n$ is the the size of the matrices. }\label{tab:complexity}
                \end{table}
                
\section{Solving the Sylvester equation}\label{sec:procedure}
In this section we show how to deal with the issue of solving
\eqref{eq:lyap} taking advantage of the quasiseparable structure of the data.
We first discuss the matrix sign iteration and then we show how to efficiently evaluate the integral 
formula~\eqref{eq:integral}. Both these algorithms are implemented in the \texttt{hm-toolbox}.

\subsection{Matrix Sign Function}\label{Sec_signfun}
Here, we briefly recall the Matrix Sign Function iteration, first proposed 
in the  $\mathcal{H}$-format by Grasedyck, Hackbusch and Khoromskij in \cite{grasedyck-riccati}. 
We use HODLR arithmetic in the iteration scheme proposed by Robert in 
\cite{roberts-sign}, that relies on the following result. 

\begin{theorem}
Let $A,B\in\mathbb C^{n\times n}$ be positive definite, then the solution $X$ of \eqref{eq:lyap} verifies
\begin{equation}
X=\frac 12 N_{12}, \qquad 
\begin{bmatrix}
N_{11}&N_{12}\\
0&N_{22}
\end{bmatrix}:=\sign\left(
\begin{bmatrix}
A&C\\
0&-B
\end{bmatrix}\right), 
\end{equation}
and --- given a square matrix $M$ --- we define $\sign(M):=\frac{1}{\pi\mathbf i}\int_{\gamma}(zI-M)^{-1}dz$ with $\gamma$ a path 
of index $1$ around the eigenvalues of $M$ with positive real part.
\end{theorem}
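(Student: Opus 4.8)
The plan is to reduce the statement to two structural properties of the matrix sign function: its invariance under similarity transformations, and the fact that on a block--diagonal matrix whose diagonal blocks have spectra confined to the open right, respectively left, half-plane it acts blockwise and returns $I$, respectively $-I$. Both follow directly from the contour definition $\sign(M)=\frac{1}{\pi\mathbf i}\int_\gamma(zI-M)^{-1}\,dz$. Before invoking them I would first observe that, since $A$ and $B$ are positive definite, the spectrum of $A$ lies in the open right half-plane and that of $-B$ in the open left half-plane; hence the block matrix $M=\bigl[\begin{smallmatrix}A&C\\0&-B\end{smallmatrix}\bigr]$ has no eigenvalue on the imaginary axis and $\sign(M)$ is well defined.

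The core of the argument is an explicit block--diagonalization of $M$ by a similarity that encodes the Sylvester equation. I would look for $T$ of the form
\[
T=\begin{bmatrix} I & -X\\ 0 & I\end{bmatrix}, \qquad
T^{-1}=\begin{bmatrix} I & X\\ 0 & I\end{bmatrix},
\]
and compute directly
\[
T^{-1} M T=\begin{bmatrix} A & C-(AX+XB)\\ 0 & -B\end{bmatrix}.
\]
The off-diagonal block vanishes \emph{exactly} when $AX+XB=C$, that is, when $X$ is the sought solution; for this choice of $X$ the conjugated matrix is block diagonal, $T^{-1}MT=\diag(A,-B)$.

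From here I would apply the two properties recalled above. Similarity invariance gives $\sign(M)=T\,\sign(\diag(A,-B))\,T^{-1}$, which is justified by pushing the conjugation through the resolvent, $(zI-M)^{-1}=T\,(zI-\diag(A,-B))^{-1}\,T^{-1}$, and noting that conjugation leaves the spectrum, hence the admissible contour $\gamma$, unchanged. The blockwise action together with $\sign(A)=I$ and $\sign(-B)=-I$ (valid since $A$ and $-B$ are Hermitian definite) yields $\sign(\diag(A,-B))=\diag(I,-I)$. A final multiplication
\[
\sign(M)=\begin{bmatrix} I & -X\\ 0 & I\end{bmatrix}\begin{bmatrix} I & 0\\ 0 & -I\end{bmatrix}\begin{bmatrix} I & X\\ 0 & I\end{bmatrix}=\begin{bmatrix} I & 2X\\ 0 & -I\end{bmatrix}
\]
identifies the blocks $N_{11}=I$, $N_{22}=-I$, and $N_{12}=2X$, so that $X=\tfrac12 N_{12}$, which is the claim.

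The routine part is the $2\times2$ block algebra; the step demanding care is the justification of the sign-function properties from the contour integral, specifically confirming that a single contour $\gamma$ of index one can simultaneously enclose all eigenvalues of $A$ and exclude those of $-B$. This is precisely where positive definiteness of $A$ and $B$ is essential, since it guarantees the clean half-plane separation of the spectra that makes $\gamma$, and thus $\sign(M)$, meaningful. This reproduces, in compact form, the classical derivation of Roberts \cite{roberts-sign} adapted to the block formulation used here.
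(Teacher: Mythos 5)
Your proof is correct, and it is essentially the classical derivation of Roberts that the paper itself merely cites (via \cite{roberts-sign} and \cite{grasedyck-riccati}) without reproducing: the paper offers no proof of this theorem, so your block-diagonalization argument --- conjugating $M=\bigl[\begin{smallmatrix}A&C\\0&-B\end{smallmatrix}\bigr]$ by $T=\bigl[\begin{smallmatrix}I&-X\\0&I\end{smallmatrix}\bigr]$, which kills the off-diagonal block exactly when $AX+XB=C$, then using similarity invariance and the blockwise evaluation $\sign(\diag(A,-B))=\diag(I,-I)$ --- fills in precisely the intended argument. One small point deserves a remark: with the paper's literal normalization $\sign(M)=\frac{1}{\pi\mathbf i}\int_\gamma (zI-M)^{-1}\,dz$, where $\gamma$ encircles only the eigenvalues with positive real part, the integral equals $2P$ ($P$ the Riesz projector onto the right-half-plane invariant subspace) rather than the standard $2P-I$; under that convention one gets $\sign(A)=2I$ and $\sign(-B)=0$, not $I$ and $-I$ as you assert. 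This discrepancy is harmless here, since subtracting the identity does not alter the off-diagonal block: either convention yields $N_{12}=2X$ via your computation $T\,\diag(\ast,\ast)\,T^{-1}$, so the claimed identity $X=\frac12 N_{12}$ survives, but the diagonal blocks $N_{11},N_{22}$ differ between the two conventions, and a fully rigorous write-up should either fix the normalization (insert the $-I$) or note, as above, that the conclusion is insensitive to it.
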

The sign function of a square matrix $S:=\sign(M)$ can be approximated applying the Newton's method to the equation $X^2-I=0$ 
with starting point $S_0=M$. This requires to compute the sequence
\begin{equation}\label{sign-sequence}
S_0=M,\qquad S_{i+1}=\frac 12(S_i+S_i^{-1}),
\end{equation}
which converges to $S$, provided that $M$ has no eigenvalues on the imaginary axis \cite{grasedyck-riccati}. Rewriting
\eqref{sign-sequence} block-wise yields
\begin{equation}\label{block-sign-sequence}
A_{i+1}= \frac 12(A_i+A_i^{-1}),\quad B_{i+1}= \frac 12(B_i+B_i^{-1}),\quad C_{i+1}=\frac 12(A_i^{-1}C_iB_i^{-1}+C_i),
\end{equation}
where $A_0=A, B_0=B$, $C_0=C$ and $C_{i+1}\rightarrow 2X$. As stopping criterion we used the condition 
\[
\norm{A_{i+1}-A_i}_F+\norm{B_{i+1}-B_i}_F+\norm{C_{i+1}-C_i}_F\leq \sqrt{\epsilon},
\]
where $\epsilon$ is the selected accuracy.  This can be heuristically
justified saying that since Newton is quadratically convergent, if the
above quantity is a good estimate of the error of the previous step
then we have already obtained the solution at the required precision.

We implemented the algorithm in \cite{grasedyck-riccati}, that performs the
iteration using  hierarchical matrix arithmetic. 
When an appropriate scaling of $A$ and $B$ is performed, convergence is
reached in few steps \cite{Higham2008}. The scaling strategy
is crucial to keep the number of iterations of the Newton scheme low, 
and the scaling parameter $\alpha>0$ can be optimally chosen at
every iteration, as shown in \cite{Higham2008}. When the spectra of $A$ and $B$ are real, the optimal choice is
$\alpha_i=\sqrt{\norm{S_i^{-1}}_2/\norm{S_i}_2}$. However, if hierarchical matrix arithmetic is employed,
the scaling strategy may introduce a non-negligible error
propagation as outlined in \cite{grasedyck-riccati}.
We found out that a good 
trade-off is to scale only in the first iteration. This does
not affect the accuracy of the iterative steps if the matrix $S_0$ can
be exactly represented in the hierarchical format \cite[Remark
5.3]{grasedyck-riccati}, and allows to keep the number of
iterations proportional to $\log(\max\{\kappa (A),\kappa(B)\})$
\cite{grasedyck-riccati}. For instance, in the case of $A=B$ being the
discrete Laplacian operator, which has a condition number that grows
as $\mathcal O(n^2)$, the latter choice makes the computational cost
of the approach $\mathcal O(n\log^3(n))$.

\subsection{Solution by means of the integral formula}\label{Sec_integral}

We now propose to apply a quadrature scheme for evaluating the
semi-infinite integral in \eqref{eq:integral}.  We perform the change of
variable $x=f(\theta):= L\cdot\cot\left(\frac{\theta}{2}\right)^2$
where $\theta$ is the new variable and $L$ is a parameter 
chosen to optimize the convergence. This is a very common strategy
for the approximation of integral over infinite domain, which is
discussed in detail by Boyd in \cite{boyd1987exponentially}. 
We transform \eqref{eq:integral} into
\begin{equation}\label{eq:integral2}
X=2L\int_0^{\pi}\frac{\sin(\theta)}{(1-\cos(\theta))^2} \ e^{-Af(\theta)}Ce^{-Bf(\theta)}d\theta,
\end{equation}
which can be approximated by a Gauss-Legendre quadrature scheme.
Other quadrature formulas, as Clenshaw-Curtis rules, can be employed. However,
as discussed by Trefethen in
\cite{trefethen2008gauss}, the difference between Gauss-Legendre and Clenshaw-Curtis formulas is
small. Moreover, in most of our tests, Gauss-Legendre schemes showed some slight computational advantages 
over Clenshaw-Curtis rules as the cost of computing the integration points is
negligible%
\footnote{In practice we have precomputed the points for the usual
  cases, so that an explicit computation of them is never carried out
  in the numerical experiments.}.

The quadrature scheme yields an approximation of \eqref{eq:integral2}
of the form
\begin{equation}
  \label{eq:quadrature-rule}
  X\approx\sum_{j=1}^m\omega_j\cdot e^{-Af(\theta_j)}Ce^{-Bf(\theta_j)},    
\end{equation}
where $\theta_j$ are the Legendre points,
$\omega_j=2 L w_j \cdot \frac{\sin(\theta_j)}{ (1 - \cos(\theta_j))^2
}$ and $w_j$ are the Legendre weights.

Finally, we numerically approximate the quantities
$e^{-Af(\theta_j)}$ and $e^{-Bf(\theta_j)}$, which represents the dominant cost of the algorithm.
For this task, we have
investigated two rational approximations, which have been
implemented in our toolbox.

\begin{description}
\item[Padé] The matrix exponential $e^A$ can be well approximated by a
  diagonal Padé approximant of degree $(d, d)$ if $\norm{A}$ is small
  enough\footnote{The exact choice of the ball where Padé is accurate enough
    depends on the desired accuracy and the value of $d$.}.
  We thus satisfy this condition by using the relation
  $e^{A} = (e^{2^{-k} A})^{2^k}$, a tecnhique typically called
  ``scaling and squaring''. The Padé approximant is known explicitly
  for all $d$.  See, e.g., \cite[Chapter 10]{Higham2008}.  In this
  case the evaluation of the matrix exponential requires $2d + 3 + k$
  matrix multiplication and one inversion where
  $k = \lceil \log_2 \norm{A} \rceil$.  This strategy is also
  implemented in the MATLAB function \texttt{expm}.
\item[Chebyshev] Since $A$ is supposed to be positive definite, the matrix exponential $e^{-tA}$ can 
be approximated by a rational Chebyshev function that is uniformly accurate for every positive
  value of $t$, as described by Popolizio and Simoncini in
  \cite{popolizio2008acceleration}. The rational function
  is of the form
    \[
    e^{x} \approx \frac{r_1}{x - s_1} + \ldots + \frac{r_d}{x - s_d}. 
  \]
  Given the poles and the weights in the above expansion,
  this strategy requires $d$ inversions and additions. 
  See, e.g., \cite{Palitta2017} for a numerical procedure to compute the poles and weights $s_i$, $r_i$.
   
\end{description}
\begin{remark}\label{rem:pade}
  In our tests, evaluating the matrix exponential $e^{-f(\theta)A}$ by
  means of the Pad\'e approximant performs better when $f(\theta)A$ has a
  moderate norm. When $f(\theta)\norm{A}_2$ is large the squaring phase becomes
  the bottleneck of the computation. In this case we rely
  on the rational Chebyshev expansion, which has a cost independent of
  $\norm{A}_2$.
\end{remark}

The procedure is summarized in
Algorithm~\ref{alg:sylv-integral}.
The evaluations of the matrix exponentials 
$\Call{expm}{-f \cdot A}$, $\Call{expm}{-f \cdot B}$
are performed according to the strategy outlined in Remark~\ref{rem:pade}.
\begin{algorithm} 
  \begin{algorithmic}[1]
    \Procedure{lyap\_integral}{$A,B,C,m$} \Comment{Solves $AX + XB = C$ with $m$ integration points}
    \State $L \gets 100$ \Comment{This can be tuned to optimize the accuracy}
    \State $[w,\theta] \gets \Call{GaussLegendrePts}{m}$ \Comment{Integration points and weights on $[0,\pi]$}
    \State $X \gets 0_{n \times n}$
    \For{$i = 1, \ldots, m$}
      \State $f \gets L \cdot \cot(\frac{\theta_i}{2})^2$
      \State $X \gets X + w_i \frac{\sin(\theta_i)}{(1 - \cos \theta_i)^2} \cdot
        \Call{expm}{-f \cdot A} \cdot C \cdot \Call{expm}{-f \cdot B}$
        \EndFor
        \State $X \gets 2L \cdot X$
    \EndProcedure
  \end{algorithmic}
  \caption{Solution of a Sylvester equation by means of the integral formula}
  \label{alg:sylv-integral}  
\end{algorithm}

\section{Solving certain generalized equations}\label{Sec_genLyap}
The solution of certain generalized Sylvester equations can be
recast in terms of standard Sylvester ones. The results in
Section~\ref{sec:quasisep-solution} thus suggest the presence of a
quasiseparable structure also in the solution of this kind of
equations. For the  sake of simplicity, we focus on generalized Lyapunov
equations, but the approach we are going to present can be easily
extended to the Sylvester case as well. We consider equations of the form
\begin{equation}\label{eq:gen-lyap}
AX+XA+\sum_{j=1}^sM_jXM_j^T= C,\qquad A,X,C,M_j\in\mathbb R^{n\times n},
\end{equation}
where $A$ is symmetric positive definite, both $A$ and $C$ are quasiseparable and $M_j$ is low rank for $j=1,\dots,s$. 
 We generalize Theorem~\ref{thm:quasisep-sol-normal} to this framework.
\begin{corollary} \label{cor:sylv-low-rank}
 Let $A$ be a symmetric positive definite matrix of quasiseparable
  rank $k_A$ and let $\kappa_A$ be its condition number. Moreover,
  consider the generalized Lyapunov equation $AX + XA + \sum_{j=1}^sM_jXM_j^T = C$, with $M_j$ of rank $r_j$, $j=1,\dots,s$, 
  and $C$ of
  quasiseparable rank $k_C$. Then a generic off-diagonal block $Y$ of
  the solution $X$ satisfies
  \[
    \frac{\sigma_{1 + k\ell}(Y)}{\sigma_1(Y)} \leq 4\rho^{-2\ell},
  \]
  where $k:=2k_A+k_C+\sum_{j=1}^sr_j$,
 $\rho=\operatorname{exp}\left(\frac{\pi^2}{2\mu(\kappa_A)}\right)$ and $\mu(\cdot)$ is defined as in Lemma~\ref{lem:sylv-bound}.
\end{corollary}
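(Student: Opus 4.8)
The plan is to reproduce the block-wise strategy used in the proof of Theorem~\ref{thm:quasisep-sol-normal}, treating the generalized terms $M_j X M_j^T$ as additional contributions to the right-hand side. Conceptually, I would first move these terms across the equality, rewriting \eqref{eq:gen-lyap} as the standard Lyapunov equation $AX + XA = \widetilde C$ with $\widetilde C := C - \sum_{j=1}^s M_j X M_j^T$. The catch is that $\widetilde C$ depends on the unknown $X$, so one cannot simply read off the quasiseparable rank of a \emph{fixed} right-hand side and invoke Theorem~\ref{thm:quasisep-sol-normal} as a black box. Instead I would argue at the level of a single off-diagonal block, exactly as in that theorem, where the relevant quantity is the ordinary rank of the block equation's right-hand side rather than a global quasiseparable rank.

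Concretely, partition $A$, $X$, $C$ and each $M_j$ conformally into $2\times 2$ blocks so that the off-diagonal blocks do not touch the main diagonal, and focus on $Y = X_{21}$. Reading off the $(2,1)$ block of \eqref{eq:gen-lyap} yields
\[
A_{22} X_{21} + X_{21} A_{11} = C_{21} - A_{21} X_{11} - X_{22} A_{21} - \sum_{j=1}^s M_{j,2}\, X\, M_{j,1}^T,
\]
where $M_{j,1}, M_{j,2}$ are the top and bottom row blocks of $M_j$. Since $A_{11}$ and $A_{22}$ are principal submatrices of the symmetric positive definite $A$, they are themselves symmetric positive definite, and by Cauchy interlacing their spectra lie in $[\lambda_{\min}(A), \lambda_{\max}(A)]$, so that both have condition number at most $\kappa_A$.

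The crux — and the only genuinely new ingredient relative to Theorem~\ref{thm:quasisep-sol-normal} — is bounding the rank of the new right-hand side. The key observation is that each generalized term contributes a block of controlled rank: $\operatorname{rank}(M_{j,2}\,X\,M_{j,1}^T) \leq \operatorname{rank}(M_j) = r_j$, regardless of the (possibly full-rank) structure of $X$. Combining this with $\operatorname{rank}(C_{21}) \leq k_C$ and $\operatorname{rank}(A_{21} X_{11}), \operatorname{rank}(X_{22} A_{21}) \leq k_A$, the right-hand side has rank at most $k = 2k_A + k_C + \sum_{j=1}^s r_j$. Applying Lemma~\ref{lem:sylv-bound} to the Sylvester equation for $X_{21}$, with $E=[a,b]$, $F=[-b,-a]$ and $b/a = \kappa_A$, then gives the claimed estimate with $\rho = \operatorname{exp}(\pi^2/(2\mu(\kappa_A)))$. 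The main obstacle is purely bookkeeping: making sure the generalized terms are partitioned correctly and that the rank count is exactly $k$; the analytic decay bound itself is inherited verbatim from Lemma~\ref{lem:sylv-bound}.
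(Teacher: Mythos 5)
Your block-wise derivation is correct: the $(2,1)$-block equation you write down is the right one, the rank count $\operatorname{rank}\bigl(C_{21}-A_{21}X_{11}-X_{22}A_{21}-\sum_{j=1}^s M_{j,2}XM_{j,1}^T\bigr)\leq 2k_A+k_C+\sum_{j=1}^s r_j$ is accurate (each $M_{j,i}$ is a row block of $M_j$, so the $j$-th term has rank at most $r_j$ for \emph{arbitrary} $X$), the interlacing argument keeps the spectra of $A_{11},A_{22}$ inside $[\lambda_{\min}(A),\lambda_{\max}(A)]$, and Lemma~\ref{lem:sylv-bound} then gives the claimed decay with $\rho=\operatorname{exp}(\pi^2/(2\mu(\kappa_A)))$. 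However, your stated reason for avoiding the short route is mistaken, and the paper's proof is exactly that short route: it rewrites \eqref{eq:gen-lyap} as $AX+XA=\widetilde C$ with $\widetilde C:=C-\sum_{j=1}^s M_jXM_j^T$, observes that $\widetilde C$ has quasiseparable rank at most $k_C+\sum_{j=1}^s r_j$ (since each $M_jXM_j^T$ has \emph{global} rank at most $r_j$, so do all of its off-diagonal blocks), and invokes Theorem~\ref{thm:quasisep-sol-normal} as a black box, yielding $k=k_A+k_A+(k_C+\sum_{j=1}^s r_j)$. The dependence of $\widetilde C$ on the unknown $X$ causes no circularity: once a solution $X$ exists it is a fixed matrix, hence so is $\widetilde C$, and Theorem~\ref{thm:quasisep-sol-normal} applies to any pair linked by the equation; crucially, the quasiseparable-rank bound on $\widetilde C$ uses nothing about $X$ beyond its existence, because $\operatorname{rank}(M_jXM_j^T)\leq r_j$ holds regardless of the structure of $X$. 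So your proof is, in substance, an unwinding of the paper's: you inline the proof of Theorem~\ref{thm:quasisep-sol-normal} and absorb the low-rank terms at the block level, using the very same rank observation, just applied per block rather than globally. The inlined version buys transparency about where each contribution to $k$ comes from; the paper's version buys brevity and makes clear that the corollary is a formal consequence of the theorem rather than a new argument.
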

\begin{proof}
The solution $X$ satisfies $AX + XA = C-\sum_{j=1}^sM_jXM_j^T $ where the right hand side has quasiseparable rank $k_C+\sum_{j=1}^sr_j$. By applying Theorem~\ref{thm:quasisep-sol-normal} to the latter we get the claim.
\end{proof}
Equation \eqref{eq:gen-lyap} can be rephrased as a $n^2\times n^2$ linear system by Kronecker transformations 
\[
\left(\mathcal L+\mathcal M\right)\vect(X)=\vect(C),\qquad \mathcal L:=I\otimes A+A\otimes I,\qquad\mathcal M:= \sum_{j=1}^sM_j\otimes M_j.
\]
We assume that $\mathcal L$ is invertible, 
and such that $M_j=U_jV_j^T$ with $U_j,V_j\in\mathbb{R}^{n\times r_j}$, $j=1,\dots,s$. 
In particular, the matrix $\mathcal M\in\mathbb R^{n^2\times n^2}$ is of rank $r:=\sum_{j=1}^sr_j^2$ and it can be factorized as
\[
\mathcal M=UV^T=\left[\begin{array}{c|c|c}
&&\\
U_1\otimes U_1&\dots&U_s\otimes U_s\\
&&
\end{array}\right]\cdot
\left[\begin{array}{c|c|c}
&&\\
V_1\otimes V_1&\dots&V_s\otimes V_s\\
&&
\end{array}\right]^T.
\]
Plugging this factorization into the Sherman-Morrison-Woodbury formula we get
\begin{equation}\label{SMW}
\vect (X) = \mathcal L^{-1} \vect (C)- \mathcal L^{-1}U\left(I_r+V^T\mathcal L^{-1}U\right)^{-1}V^T\mathcal L^{-1}\vect (C).
\end{equation}
See, e.g., \cite{Benner:2013:low}. As shown in \cite[Section 4]{Ringh2016},
the solution of \eqref{eq:gen-lyap} by \eqref{SMW} requires the inversion of a $r\times r$ linear system and the solution
of $r+1$ Lyapunov equations of the form
$$AZ+ZA=C,\quad AZ_{ij}+Z_{ij}A=\widetilde U_{ij}, \mbox{ for } i=1,\dots,s, j=1,\dots,r_i^2,$$
where $\widetilde U_{ij}=\vect^{-1}((U_i\otimes U_i) (:,j))$ has rank
1. Since $A$ and $C$ are quasiseparable, the matrix $Z$ can be
computed by one of the method presented in the previous sections,
whereas well-established low-rank methods can be employed in computing
$Z_{ij}$'s. In our tests we have used the method based on extended
Krylov subspaces discussed in \cite{Simoncini2007}. 
The procedure is illustrated in Algorithm~\ref{alg:swm}.

       \begin{algorithm}\caption{Solution of a generalized Lyapunov equation (low-rank $\mathcal{M}$) 
       by \eqref{SMW}}\label{alg:swm}
       \begin{algorithmic}[1]
       \Procedure{SMW\_Gen\_Lyap}{$A,C,U_i,V_i$}\Comment{Solve $AX+XA+\sum_{i=i}^sU_iV_i^TXV_iU_i^T=C$ }
       \State $\widehat{X}\gets $ $A\widehat{X}+\widehat{X}A=C$ \label{first_sol}
       \For{$h=1:s$}
       \State $\widehat X_h\gets V_h^T \widehat X V_h$
       \EndFor
       \For{$h=1:s$, $i,j=1:r_h$}
       \State $ U_{ij}^h\gets U_h(:,i)U_h(:,j)^T$
       \State $Z_k\gets$ $ AZ_k+Z_kA= U_{ij}^h$ \Comment{$k:=j+(i-1)r_h+\sum_{t=1}^{h-1}r_t^2$ }
       %$\texttt{lyap}($A,U_{ij}^h$)\Comment{$k:=j+(i-1)r_h+\sum_{t=1}^{h-1}r_t^2$ }
	   \For{$m=1:s$}
       \State $ W_{mk} \gets V_m^TZ_kV_m$
       \EndFor 
       \State $[Z_{1+\sum_{t=1}^{h-1}r_t^2},\dots,Z_{\sum_{t=1}^{h}r_t^2}]=Z_h^{(u)}Z_h^{(v)T}$
       \EndFor
       \State $R\gets \left(I_r+\Big[\vect(W_{mk})\Big]_{\substack{
           m=1,\dots,s \\ k=1,\ldots,r
           }}\right)^{-1}$
        \State $\widehat Z \gets R\cdot[ \vect(\widehat{X}_1);\dots;\vect(\widehat X_s)]$
        
        \State  $S\gets \sum_{h=1}^sZ_h^{(u)}\cdot \widehat Z_{r_h}\cdot Z_h^{(v)T}$
        \Comment{$\widehat Z_{r_h}:= \texttt{reshape}(\widehat Z(1+\sum_{i=1}^{h-1}r_i^2:\sum_{i=1}^hr_i^2),r_h,r_h)$} 
        
        \State $X\gets \widehat X - S$
       \State \Return $X$
       \EndProcedure
       \end{algorithmic}
       \end{algorithm}

Another interesting class of generalized Lyapunov equations consists of equation \eqref{eq:gen-lyap} with 
 $\rho(\mathcal{L}^{-1}\mathcal M)<1$, where $\rho(\cdot)$ denotes the spectral radius.
For this kind of problems, the matrices $M_j$ do not need to be low-rank, but we suppose
they all have a small quasiseparable rank.
In this case, one can consider the Neumann series expansion of 
 $(\mathcal L+\mathcal M)^{-1}$ as done in \cite{palitta-svezia}. More precisely, it holds
\[
(\mathcal L+\mathcal M)^{-1}= (I+\mathcal L^{-1}\mathcal M)^{-1}\mathcal L^{-1}=
\sum_{j=0}^{\infty} (-1)^j(\mathcal L^{-1}\mathcal M)^j\mathcal L^{-1},
\]
so that the solution $X$ to \eqref{eq:gen-lyap} verifies
\begin{equation}\label{eq:neuman}
X=\sum_{i=0}^\infty Z_i,\quad\text{where}\quad \begin{cases}
AZ_0+Z_0A=C, \\
AZ_{i+1}+Z_{i+1}A= -\sum_{j=1}^sM_jZ_iM_j^T.\\
\end{cases}
\end{equation}
A numerical solution can thus be computed truncating the series in \eqref{eq:neuman}, that is 
$X\approx X_\ell:=\sum_{i=0}^\ell Z_i$ where the number of terms $\ell$ is related to the accuracy of the 
computed solution.
If $\ell$ is moderate, that is $\rho(\mathcal{L}^{-1}\mathcal M)\ll1$, $X_\ell$ is the sum of few quasiseparable
matrices $Z_i$ and it is thus quasiseparable.
Notice that the quasiseparability of $M_j$'s
is necessary to mantain a quasiseparable structure in the right-hand sides $-\sum_{j=1}^sM_jZ_iM_j^T$, $i=0,\ldots,\ell-1$.

\section{Numerical Experiments}
\label{sec:numerical}
An extensive computational comparison among different approaches 
for quasiseparable Sylvester equations -- as well
as their implementation -- is still lacking in the literature,
and in this section we perform some numerical experiments trying to fill this gap.
To this end, we employ the MATLAB
\texttt{hm-toolbox} that we have developed while writing this paper. 
The toolbox --- which includes all the tested algorithms --- is now freely available
at \url{https://github.com/numpi/hm-toolbox}. 
All the timings reported are relative
to MATLAB 2016a run on a machine with a CPU running at 3066 MHz, $12$ cores\footnote{All the available cores have only be used to run the parallel implementation of the solver based on the integral formula. All the other solvers did not exploit the parallelism in the machine.}, and 
192GB of RAM.

Each of the following sections contains a specific example. Some of these
are artificially constructed to describe particular cases; others present
real or realistic applications, arising from PDEs. We start by describing
the classical Laplacian case, and then proceed comparing our results with
a 2D heat equation arising from practical applications. Eventually, we show how to solve some partial 
integro-differential equations.

To test the accuracy of our approach we report the relative residual on the linearized system of the computed solution.
If $\mathcal S$ is the coefficient matrix of the linearized system we measure the relative residual, 
\[
r(\mathcal S,X):= \frac{ \norm{\mathcal S\cdot x -c}_2 }{\norm{\mathcal S}_F\cdot \norm{x}_2},\qquad x=\vect(X),
\qquad c=\vect(C),
\]
which can be easily shown to be the relative backward error in the Frobenius norm \cite{higham2002accuracy}.
When we deal with (standard) Sylvester problems, we have $\mathcal S= I\otimes A+B\otimes I$ with $A$ and $B$ symmetric.
This allows to use the --- easier to compute --- bound
\[
\norm{\mathcal S}_F^2\geq  n(\norm{A}_F^2+\norm{B}_F^2),
\]
so that 
\[
r(\mathcal S,X)= \frac{ \norm{\mathcal S\cdot x -c}_2 }{\norm{\mathcal S}_F\cdot \norm{x}_2}
=\frac{ \norm{AX+XB-C}_F }{\norm{I\otimes A+B\otimes I}_F\cdot \norm{X}_F}\leq
\frac{ \norm{AX+XB-C}_F }{\sqrt{n(\norm{A}_F^2+\norm{B}_F^2)}\cdot \norm{X}_F},
\]
and we actually compute and check the right-hand side in the above expression.

In case of a generalized Lyapunov equation, the system matrix is of the form 
$\mathcal S:= I\otimes A+B\otimes I+ M\otimes M$ and 
the relative residual norm is bounded using the inequality 
$\norm{\mathcal S}_F\geq \norm{I\otimes A+B\otimes I}_F-\norm{M}_F^2$.
Notice that this never requires to form the large system matrix
$\mathcal S$, and can be evaluated using the arithmetic of
hierarchical matrices when considering large scale problems. 

\subsection{The Laplace equation}\label{Laplace_ex}

We consider the 2-dimensional (2D) Laplace equation on the unit square $\Omega=[0,1]^2$

\[
  \begin{cases}    
    - \Delta u =  f(x, y) & (x,y) \in \Omega, \\
    u(x,y) = 0 & (x,y) \in \partial \Omega.
  \end{cases}, \qquad
  \Delta u = \frac{\partial^2 u}{\partial x^2} +
    \frac{\partial^2 u}{\partial y^2}.
\]

We construct the matrix $A$ representing the finite difference
discretization of the second-order derivative in the above equation on
a $n \times n$ grid using centred differences,
so that we obtain the equation $AX + XA = C$,
with
\[
  A = \frac{1}{h^2}
  \begin{bmatrix}
    2 & -1 \\
    -1 & \ddots & \ddots \\
    & \ddots & \ddots & -1 \\
    & & -1 & 2 \\
  \end{bmatrix},  \qquad
  h = \frac{1}{n - 1},
\]
and $C$ contains the samplings of the function $f(x, y)$ on our
grid. We consider the case where $f(x, y) = \log(1+|x-y|)$. As already
discussed, the latter choice provides a right hand-side which is
numerically quasiseparable.  This is due to the fact that in the
sub-domains corresponding to the off-diagonal blocks, $f$ is analytic
and it is well approximated by a sum of few separable functions.  One
can also exploit this property in order to retrieve the HODLR
representation of $C$; the sampling of a separable function
$g(x)\cdot h(y)$ on a square grid provides a matrix of rank $1$ and
the sampling of $g$ and $h$ yield its generating factors. The
computation of the expansion of $f$ in the sub-domains has been
performed by means of Chebfun2 \cite{cheb}.

Using \texttt{hm-toolbox}, the equation can be solved with a few
MATLAB instructions, as shown in Figure~\ref{fig:matlab-laplacian} for
the case $n = 2048$. The function \texttt{hmoption} can be used to set
some options for the toolbox. In this case we set the relative
threshold for the off-diagonal truncation to $10^{-12}$, and the
minimum size of the blocks to $256$.
 The class \texttt{hm} implements the hierarchical structure,
 and here we initialize it using a sparse tridiagonal matrix.
 Invoking \texttt{lyap} function uses our implementation
 specialized for $\mathcal H$-matrices.
 \begin{figure}
   \begin{framed}
\noindent\texttt{>> n = 2048;} \\
\noindent\texttt{>> hmoption('threshold', 1e-12);} \\
\noindent\texttt{>> hmoption('block-size', 256);} \\
\noindent\texttt{>> f = @(x,y) log(1 + abs(x - y));} \\
\noindent\texttt{>> A = (n-1)\textasciicircum 2 * spdiags(ones(n,1) * [ -1 2 -1 ], -1:1, n, n); } \\
\noindent\texttt{>> H = hm('tridiagonal', A);} \\
\noindent\texttt{>> C = hm('chebfun2', f, [-1,1], [-1,1], n);} \\
\noindent\texttt{>> X = lyap(H, C, 'method', 'sign');} \\
\noindent\texttt{>> hmrank(X)} \\[5pt]
\noindent\texttt{ ans = } \\[5pt]
\noindent\phantom{aaaaa}\texttt{     13}     
   \end{framed}        
   \caption{Example MATLAB session where the {\texttt{\textup{hm-toolbox}}} is
     used to compute the solution of a Lyapunov equation
     involving the 2D Laplacian and a numerically quasiseparable
   right hand-side.}
   \label{fig:matlab-laplacian}
   \end{figure}
In this example, we used the sign iteration, which is the default
method for the implementation of \texttt{lyap}. 
The quasiseparable rank of the solution (obtained
using the function \texttt{hmrank})
is $13$, which is reasonably small compared to the size of the problem. 

\begin{table}[t]
  \centering
  \small
  \pgfplotstabletypeset[%
    columns={0,1,2,3,4,10,5,6,7},
    columns/0/.style={column name=$n$},
    columns/1/.style={column name=$T_{\text{Sign}}$},
    columns/2/.style={column name=$Res_{\text{Sign}}$},    
    columns/3/.style={column name=QS rk},
    columns/4/.style={column name=$T_{\text{Exp}}$},
    columns/10/.style={column name=$T_{\text{ParExp}}$},
    columns/5/.style={column name=$Res_{\text{Exp}}$},
    columns/6/.style={column name=QS rk},
    columns/7/.style={column name=$T_{\text{lyap}}$}
  ]{experiment1.dat}
  \caption{Timings and features of the solution of the Laplacian equation for
    different grid sizes. For the methods based on the HODLR arithmetic the minimum block size is set to $256$ and the relative threshold in truncation is $\epsilon=10^{-12}$. For small problems we also report the timings
    of the \texttt{lyap} function included in the Control Toolbox in
    MATLAB. The relative residuals of the Lyapunov equation are reported as well for the
    different methods. The residuals for the parallel version of the
      method based on the exponential have been omitted since they coincide 
      with the ones of the sequential one. In fact, the two algorithms perform
      exactly the same computations.}
  \label{tab:laplacian}
\end{table}

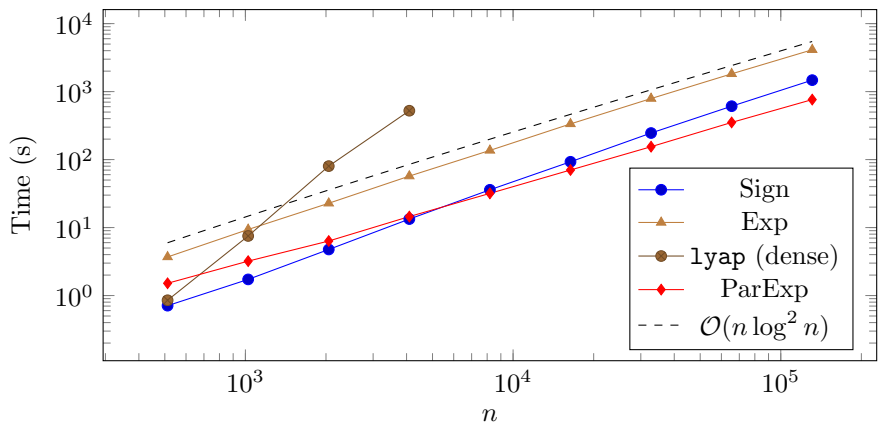
\begin{figure}
  \centering

  \begin{tikzpicture}
    \begin{loglogaxis}[width=.8\linewidth, height=.3\textheight,
      legend pos = south east, ymin=1.1e-1,
      xlabel = $n$, ylabel = Time (s)]
      \addplot table[x index=0, y index=1]  {experiment1.dat};
      \addplot[brown,mark=triangle*] table[x index=0, y index=4]  {experiment1.dat};
      \addplot table[x index=0, y index=7] {experiment1.dat};
      \addplot[red,mark=diamond*] table[x index=0, y index=10] {experiment1.dat};
      \addplot[domain=512:131000,dashed] {3e-4 * x * ln(x) * ln(x)};
      \legend{Sign, Exp, \texttt{lyap} (dense), ParExp, $\mathcal O(n \log^2 n)$};
    \end{loglogaxis}
  \end{tikzpicture}
  
  \caption{Timings for the solution of the Laplacian equation for
    different grid sizes. The performances of the different algorithms  are reported.
    The dashed line reports the theoretical complexity of $\mathcal O(n\log^2(n))$.}
  \label{fig:laplacian}
\end{figure}

In Table \ref{tab:laplacian} and Figure~\ref{fig:laplacian}
we show the timings for the solution of this
problem for different grid sizes.
We stress that, since full matrices are never represented,
a large amount of RAM is not needed to run the solver. Nevertheless, this
is needed when using \texttt{lyap} from the Matlab Control Toolbox, so we
have comparisons with the latter only for $n \leq 4096$. 

The results in Table \ref{tab:laplacian} show that the timings
are just a little more than linear in the size of the problem.
Figure~\ref{fig:laplacian} illustrates that the complexity is in fact
$\mathcal O(n \log^2 n)$, for the methods that evaluate the integral formula \eqref{eq:integral}.

The approach based on the sign iteration is
faster than the one that exploits the integral formula.
Nevertheless the latter has a slightly better asymptotic cost since it requires $\mathcal O(n\log^2(n))$ flops 
instead of  $\mathcal O(n\log^3(n))$. Another advantage of the integral formula is the easy parallelization. 
In fact, the evaluation of the integrand at the nodes can be carried out in a parallel fashion on different machines or cores. 
In our tests we used $32$ integration nodes so the maximum gain in the performances can be obtained using $32$ cores. 
The results reported in Table~\ref{tab:laplacian} confirm the acceleration of the paralel implementation when using $12$ cores.

\subsection{The 2D heat equation}
\label{sec:heat-equation}

We consider now a case of more practical interest, which has been
described and studied by Haber and Verhaegen in
\cite{haber,haber2014estimation}.  They study a particular
discretization for the 2D heat equation that gives rise to a Lyapunov
equation with banded matrices. Let $S_m = \textrm{trid}(1,0,1)$ be
the $m \times m$ matrix with $1$ on the super and subdiagonal and zeros
elsewhere, and let $\mathbf 1_m\in\mathbb C^m$ be the vector with all
the entries equal to $1$. The resulting Lyapunov equation involves the coefficient matrices
\[
  A = I_m \otimes (a I_6 + e S_6) + e S_m \otimes I, \qquad 
  C = I_m \otimes (0.2 \cdot \mathbf 1_6 \mathbf 1_6^T + 0.8 I) + 0.1 S_m \otimes (\mathbf 1_6 \mathbf 1_6^T).
\]
For the details on how these matrices are obtained from the
discretization phase we refer to \cite{haber2014estimation}. The parameters
$a$ and $e$ are set to $a = 1.36$ and $e = -0.34$.
These two matrices
are banded, with bandwidth $6$ and $11$, respectively. However, a
careful look shows that the quasiseparable rank of $A$ is $6$, but the
one of $C$ is $1$: the quasiseparable representation can exploit more
structure than the banded one in this problem.

\begin{table}[t]
  \centering
  \small
  \pgfplotstabletypeset[%
  sci zerofill,
  columns={0,1,2,4,5,6,8,9},
  columns/0/.style={column name=$n$},
    columns/1/.style={column name=$T_{\text{Sign}}$},
    columns/2/.style={column name=$Res_{\text{Sign}}$},
    columns/4/.style={column name=$T_{\text{ParExp}}$},
    columns/5/.style={column name=$Res_{\text{ParExp}}$},
    columns/6/.style={
    	column name=QS rk,
    	string replace={0}{}
    },
    columns/8/.style={column name=$T_{\text{SparseCG}}$},
    columns/9/.style={column name=$Res_{\text{SparseCG}}$}, 
      empty cells with={---}
  ]{experiment2-malc1.dat}
  \caption{Timings and features of the solution of the heat equation for
    different grid sizes. For the methods based on the HODLR arithmetic the minimum block size is set to $256$ and the relative threshold in truncation is $\epsilon=10^{-12}$.
    In this example the quasiseparable rank of the solution coincides for the implementation based on the sign function and on the integral formula,
    so we have only reported it once.
    }
  \label{tab:heat-equation}
\end{table}

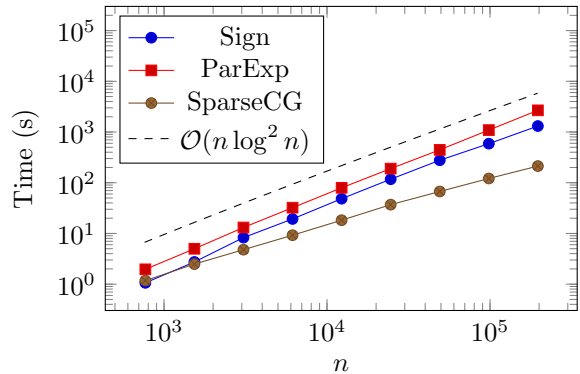
\begin{figure}
  \centering
  \begin{minipage}{.45\linewidth}
  	\pgfplotstabletypeset[%
  	sci zerofill,
  	columns={0,7,10},
  	columns/0/.style={column name=$n$},
  	columns/7/.style={column name=$Mem_{\text{HODLR}}$},
  	columns/10/.style={column name=$Mem_{\text{Sparse}}$},  
  	empty cells with={---}
  	]{experiment2-malc1.dat}
  \end{minipage}~\begin{minipage}{0.53\textwidth}  	
    \bigskip
    
  	\begin{tikzpicture}
    \begin{loglogaxis}[width=\linewidth, height=.27\textheight,
      legend pos = north west,
      xlabel = $n$, ylabel = Time (s), ymax = 3e5]
      \addplot table[x index=0, y index=1]  {experiment2-malc1.dat};
       \addplot table[x index=0, y index=4]  {experiment2-malc1.dat};
        \addplot table[x index=0, y index=8]  {experiment2-malc1.dat};  
        \addplot[domain=760:196000,dashed] {2e-4 * x * ln(x) * ln(x)};
      \legend{Sign, ParExp, SparseCG,  $\mathcal O(n \log^2 n)$};
    \end{loglogaxis}          
  \end{tikzpicture}
  \end{minipage}
  \caption{On the left, the memory consumption in storing the
  	solution of the heat equation computed with
  	\texttt{ParExp} and \texttt{SparseCG}, respectively. The
  	first exploit the HODLR representation while the second one
  	makes use of the sparse format. The numerical values
  	reported are in KB (Kilobytes). On the right, timings for the solution of the heat equation.} 
  \label{fig:heat-equation}
\end{figure}

We have solved this problem for different values of $m$, from $m = 128$
to $m = 32768$.  For each $m$, the size of the associated matrices $A$
and $P$ is $6m \times 6m$.
We have also compared our implementation to the (sparse)
conjugate gradient
implemented in matrix form, as proposed in \cite{Palitta2017}. One
can see that, at the $k$-th iteration of the CG method the solution
in matrix form has a bandwidth proportional to $k$; when the method
converges in a few steps, this can provide an accurate banded
approximation to the solution in linear time.
In fact, this problem is  well-conditioned
independently of $n$, and therefore is the ideal candidate for the
application of this method (as shown in \cite{Palitta2017}). Additionally, 
the sparse arithmetic implemented in MATLAB is very efficient, and
the computational cost is linear without any logarithmic factor.
Figure~\ref{fig:heat-equation} confirms the predicted
$\mathcal O(n \log^2 n)$ complexity for the methods that we propose.
The timings of the conjugate gradient are comparable to the
sign iteration for small dimensions, but then the
absence of the $\log^2(n)$ factor in the complexity is a
big advantage for the former method. 

All the proposed approaches seems to work better in terms of CPU time than
the one reported by Haber and Verhaegen in \cite{haber}, which use a
comparable (although slightly older) CPU. Moreover, their approach
is delivering only about two digits of accuracy with the selected parameter,
while we get solutions with a relative error of
about $10^{-10}$ in the Frobenius norm.

The table in Figure~\ref{fig:heat-equation} reports the memory
usage when the solution is stored in the HODLR and in the sparse formats.
We can see that the method using HODLR matrices, although
slower, is more memory efficient compared to the \texttt{SparseCG} of
a factor of about $2$. 

\subsection{Partial Integro-Differential equation}
Here, we consider a generalized Sylvester equation that has the
structure described by Corollary~\ref{cor:sylv-low-rank}, and arises from the discretization of the following partial differential equation
\begin{equation}\label{eq:diff-int}
-\Delta u (x,y) + q(x,y)\int_{[0,1]^2}r(x,y)u(x,y)\: dx\, dy=f(x,y) \qquad (x,y)\in (0,1)^2,
\end{equation}
where $q(x,y)=q_1(x)q_2(y)$ and $r(x,y)=r_1(x)r_2(y)$ are separable functions, and we assume zero Dirichlet boundary conditions.
The discrete operator can be expressed in terms of the matrix equation 
$AX+XA+M_1XM_2^T = C$ where $A= (n-1)^2\cdot \trid(-1,2,-1)$, 
$C$ is the sampling of $f$ over the uniform grid $x_j=y_j= \frac{j-1}{n-1}$, $j=1\dots,n$, and
\begin{align*}
M_1&= \frac{1}{n-1} \begin{bmatrix}
q_1(x_1)\\
q_1(x_2)\\
\vdots\\
q_1(x_{n-1})\\
q_1(x_n)
\end{bmatrix}\begin{bmatrix}
\frac 12r_2(x_1)\\
r_2(x_2)\\
\vdots\\
r_2(x_{n-1})\\
\frac 12r_2(x_n)
\end{bmatrix}^T,\qquad 
M_2= \frac{1}{n-1} \begin{bmatrix}
q_2(x_1)\\
q_2(x_2)\\
\vdots\\
q_2(x_{n-1})\\
q_2(x_n)
\end{bmatrix}\begin{bmatrix}
\frac 12r_1(x_1)\\
r_1(x_2)\\
\vdots\\
r_1(x_{n-1})\\
\frac 12r_1(x_n)
\end{bmatrix}^T.
\end{align*}
In this experiment we consider $f(x,y)=\log(1 +|x-y|)$,  and $q_j(x)=r_j(x)\equiv \sin(3x)$ $j=1,2$ so that $M_1=M_2$.
We  test Algorithm~\ref{alg:swm} and the results are reported in 
Figure~\ref{fig:part-integro}. The results  report the timings of Algorithm~\ref{alg:swm} using the sign
iteration for solving the first quasiseparable Lyapunov equation and one can see that
the timings of the overall procedure are just slightly larger than the ones reported for Example \ref{Laplace_ex}.
Indeed, step \ref{first_sol} of Algorithm~\ref{alg:swm} is the dominating cost of the whole computation. 

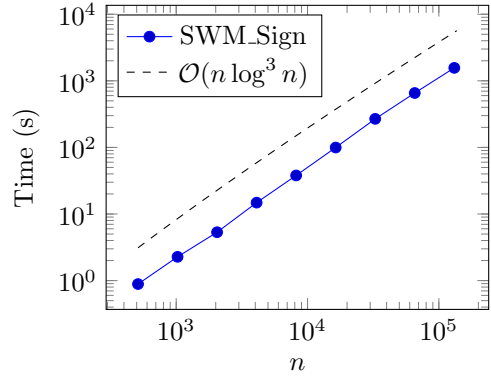
\begin{figure}
  \centering
  \begin{minipage}{0.5\textwidth}
     \pgfplotstabletypeset[%
     sci zerofill,
     columns={0,1,2,3},
     columns/0/.style={column name=$n$},
     columns/1/.style={column name=$T_{\text{Sign}}$},
     columns/2/.style={column name=$Res_{\text{Sign}}$},
     columns/3/.style={column name=QS rk},
     columns/4/.style={column name=$T_{\text{ParPad\'e}}$},
     columns/5/.style={column name=$Res_{\text{ParPad\'e}}$},
     columns/6/.style={column name=QS rank}
     ]{experiment4-malc1.dat}
  \end{minipage}~\begin{minipage}{0.45\textwidth}
  \bigskip 
  \begin{tikzpicture}
  \begin{loglogaxis}[width=\linewidth, height=.27\textheight,
  legend pos = north west,
  xlabel = $n$, ylabel = Time (s)]
  \addplot table[x index=0, y index=1]  {experiment4-malc1.dat};
  \addplot[domain=512:137000,dashed] {2.5e-5 * x * ln(x) * ln(x) * ln(x)};
  \legend{SWM\_Sign,$\mathcal O(n \log^3 n)$};
  \end{loglogaxis}
  \end{tikzpicture}
\end{minipage}
   \caption{On the left, timings and features of the solution of the partial integro-differential equation for different grid sizes. The minimum block size is set to $256$ and relative threshold in truncation is $\epsilon=10^{-12}$.   	
   	On the right, we plot timings for the solution of the generalized Lyapunov equation coming from the partial-integro differential equation.
    }
   \label{fig:part-integro}
  \end{figure}

\section{Final remarks}\label{sec:conclusion}
We have compared and analyzed two different strategies for the
solution of some linear matrix equations with rank structured
data. We have presented some theoretical results that justify
the feasibility of the approaches relying on tools from rational
approximation. The techniques that we developed can be applied to
treat the case of banded matrix coefficients in a natural way, thus
providing an alternative approach to the ones presented in
\cite{haber,Palitta2017}. Moreover, our methods still perform well when the
conditioning of the coefficients increases. This allows to cover a wider
set of problems related to PDEs, such as the ones including the Laplacian operator. 

Numerical tests confirm the scalability of the approach in treating
large scale instances. Our experiments show that the sign iteration is
usually the fastest and most accurate method, although the procedure
based on the integral formula can be more effective in parallel
environments.

In the case of the asymptotically ill-conditioned coefficients
in the matrix equation (such as for the 2D Laplacian), the complexity
of the sign iteration is slightly worse than the one of the
integration formula ($\mathcal O(n \log^3 n)$ instead
of $\mathcal O(n \log^2 n)$). This can make the latter method
the most attractive choice for really large scale problems. Moreover, relying on HSS arithmetic \cite{chandra}, in place of HODLR, 
would likely further improve the proposed
approach. This will be subject to future 
investigation. The main difficulty lies in 
creating a fast and reliable procedure for
the computation of the inverse in HSS format.

\section{Acknowledgments}
We thank Daniel Kressner for useful discussions and suggestions. 
 All the authors are members of the INdAM Research group GNCS and
this work has been supported by the GNCS/INdAM project ``Metodi numerici avanzati per equazioni e funzioni di matrici
con struttura'', by the Region of Tuscany (PAR-FAS 2007 -- 2013) and by MIUR,
the Italian Ministry of Education, Universities and Research (FAR) within the Call FAR -- FAS 2014
(MOSCARDO Project: ICT technologies for structural monitoring of age-old constructions based
on wireless sensor networks and drones, 2016 -- 2018). Their support is gratefully
acknowledged.

% Uncomment in case you want a bibliography.
%
\bibliographystyle{abbrv}
\bibliography{library}

\end{document}